\theoremstyle{plain}
\newtheorem{theorem}{Theorem}[section]
\newtheorem{proposition}[theorem]{Proposition}
\newtheorem{lemma}[theorem]{Lemma}
\theoremstyle{definition}
\newtheorem{remark}[theorem]{Remark}
\theoremstyle{remark}
{%
\end{oldthebibliography}%
}
\newcommand{\eps}{\varepsilon}
\newcommand{\R}{\mathbb{R}}
\newcommand{\cF}{\mathcal{F}}
\newcommand{\cN}{\mathcal{N}}
\DeclareMathOperator*{\argmin}{arg\, min}
\DeclareMathOperator*{\argmax}{arg\, max}
\newcommand{\tF}{\tilde{F}}
\newcommand{\tR}{\tilde{R}}
\newcommand{\mykill}[1]{}
\numberwithin{equation}{section}
\begin{document}

\title{\vspace{-2.2em}
  Mean Field Contest with Singularity
\date{\today}
\author{Marcel Nutz\thanks{Departments of Statistics and Mathematics, Columbia University, New York, United States, \texttt{mnutz@columbia.edu}. Research supported by an Alfred P.\ Sloan Fellowship and NSF Grant DMS-1812661.}
\and
Yuchong Zhang\thanks{Department of Statistical Sciences, University of Toronto,
Canada, \texttt{yuchong.zhang@utoronto.ca}. Research supported by NSERC Discovery Grant RGPIN-2020-06290.}
}
}
\maketitle \vspace{-2.2em}

\begin{abstract}
We formulate a mean field game where each player stops a privately observed Brownian motion with absorption. Players are ranked according to their level of stopping and rewarded as a function of their relative rank. There is a unique mean field equilibrium and it is shown to be the limit of associated $n$-player games. Conversely, the mean field strategy induces $n$-player $\varepsilon$-Nash equilibria for any continuous reward function---but not for discontinuous ones. In a second part, we study the problem of a principal who can choose how to distribute a reward budget over the ranks and aims to maximize the performance of the median player. The optimal reward design (contract) is found in closed form, complementing the merely partial results available in the $n$-player case. We then analyze the quality of the mean field design when used as a proxy for the optimizer in the $n$-player game. Surprisingly, the quality deteriorates dramatically as $n$ grows. We explain this with an asymptotic singularity in the induced $n$-player equilibrium distributions.
\end{abstract}

\vspace{0.4em}

{\small
\noindent \emph{Keywords} Mean Field Game; Stochastic Contest; Optimal Contract; Stackelberg Game

\noindent \emph{AMS 2020 Subject Classification}
91A13; %
91A65; %
91A15 %
}

\section{Introduction}

We formulate a mean field game where each player stops a privately observed Brownian motion with drift and absorption at the origin. Players are ranked according to their level of stopping and paid a reward which is a decreasing function of the rank. This is an infinite-player version of the $n$-player game studied in~\cite{NutzZhang.21a} which in turn extends the Seel--Strack model~\cite{SeelStrack.13} where only the top-ranked player receives a reward.
First, we establish existence and uniqueness of a mean field equilibrium for any given reward function. Second, we solve the problem of optimal reward design (optimal contract) for a principal who can choose how to distribute a given reward budget over the ranks and aims to maximize the performance (i.e., stopping level) at a given rank, for instance the median performance among the players.
An analogous problem was studied for the $n$-player case in~\cite{NutzZhang.21a}, but only a partial characterization of the optimal design is available. Here, taking the mean field limit enables a clear-cut answer.

The present work also serves as a case study: from the perspective of mean field analysis, a particular feature of this game is to be tractable without necessarily being smooth. Atoms occur naturally in the equilibrium distribution, but the mean field game nevertheless admits an equilibrium that can be described in closed form, and we can prove analytically that the equilibrium is unique. As the $n$-player equilibrium can also be described in detail, we can observe the quality of the mean field approximation, not only for the mean field game (with fixed reward function) but also for the reward design problem---which is a Stackelberg game between the principal and a continuum of players. It turns out that this case study offers a cautionary tale.
 
In the $n$-player setting, the Seel--Strack model has been generalized and varied in several directions:  more general diffusion processes~\cite{FengHobson.15}, random initial laws~\cite{FengHobson.16a}, heterogeneous loss constraints~\cite{Seel.15}, behavioral players~\cite{FengHobson.16b}. See also~\cite{FangNoeStrack.20,NutzZhang.21a} for references to other models on risk-taking under relative performance pay, and~\cite{Vojnovic.2016} for an introduction to rank-order prize allocation. The novelty in the present work is to analyze a mean field model along the lines of Seel--Strack and its optimal design problem; we focus on the original Brownian dynamics. For the theory and applications of mean field games, the monographs \cite{BensoussanFrehseYam.13, CarmonaDelaRue.17a, CarmonaDelaRue.17b} provide an excellent overview and references. The very recent mean field model~\cite{AnkirchnerKaziTaniWendtZhou.21} can be related to the first part of this work. %
In~\cite{AnkirchnerKaziTaniWendtZhou.21}, players control the volatility of a Brownian motion up to an independent exponential time and are then ranked. The reward is one above a certain rank and zero below. As the horizon is exponential and volatilities can only be chosen within an interval that is bounded and bounded away from zero, the model has a smooth equilibrium and the distinct features of the present work do not appear. (Questions of optimal design, or reward functions other than the binary one, are not studied.) Contracts between a principal and $n$~agents have been analyzed in~\cite{GreenStokey.83}, among many others. Closer to the present work, \cite{ElieMastroliaPossamai.19} studies optimal contracts between a principal and infinitely agents, using the theory of mean field games for diffusion control.

\paragraph{Mean Field Equilibrium.}
The mean field game as formalized in Section~\ref{se:MFE} admits an equilibrium as soon as the reward function~$R$ is right-continuous and a natural integrability condition on the drift parameter holds (the latter is also present in the $n$-player game). The equilibrium stopping distribution can be described in closed form using the right-continuous inverse of $y\mapsto R(1-y)$; cf.~Theorem~\ref{thm:MFE}. Once the correct Ansatz is guessed, the existence result is reduced to a verification proof following a direct martingale argument. The uniqueness result is more involved, in part because---in contrast to the $n$-player game and many other mean field models---atoms in the equilibrium cannot be excluded; in fact, the closed-form solution already indicates that atoms will arise unless~$R$ is strictly monotone. The first part of the proof (Section~\ref{se:constantRewardsAndAtoms}) relates flat stretches in reward to atoms in any potential equilibrium distribution. The basic idea is to show, a priori, that equal pay must correspond to equal performance: ranks with the same reward are occupied by players that stop at the same level, and vice versa. On the other hand, jumps in reward are related to gaps in the support of the equilibrium distribution.
The second part of the uniqueness proof (Section~\ref{se:uniquenessByMinimization}) is based on the idea that in any equilibrium, the opposing players collectively act such as to minimize the value function of a given representative player. This approach enables an analytic proof using optimal stopping theory and dynamic programming arguments: using the additional constraints shown in the first part, the minimization is shown to have a unique solution, proving uniqueness of the equilibrium. This analysis is complicated by the presence of atoms. We remark that a similar uniqueness proof could be given for the $n$-player game, in which case it would simplify substantially because atoms can be excluded a priori (however, a different proof is already available).

\paragraph{Optimal Reward Design.}
In the $n$-player game, \cite{NutzZhang.21a} studied the design problem for a principal maximizing the performance at the $k$-th rank; for example, maximizing the revenue in a second-best auction, or the median performance among employees, customers, students, etc. To consider the analogue in the mean field limit, we replace the $k$-th rank by the quantile $\alpha=k/n$, for instance $\alpha=0.5$ for the median player. A reasonable guess for the optimal reward design is to (a)~pay nothing to the ranks below the target~$\alpha$ and (b) distribute the reward budget uniformly over the ranks above. We show in Theorem~\ref{th:design} that this guess is correct, for any value of the drift parameter. By contrast, the guess is wrong in the finite player game: for nonnegative drift, the general shape is correct, but the optimal cut-off point can be at a rank strictly below the target rank~$k$. For negative drift, the sharp cut-off can be replaced by a smoothed shape which also pays a small number of rewards of different sizes. (A full characterization of the optimal reward is only available for zero drift; cf.~\cite{NutzZhang.21a}.) Again, the mean field limit proves useful in allowing for a fuller description and a clearer result. On the other hand, knowing only the mean field limit may suggest an over-simplified picture for the finite player game.
One previous model where the optimal reward design problem was solved completely for both $n$-player and mean field setting, is the Poissonian game of~\cite{NutzZhang.19} where players control the jump intensity and are ranked according to their jump times. There, the optimal designs are more similar between the two settings; part~(a) of the above guess is always correct---the optimal reward has a sharp cut-off exactly at the target rank---though the shape over the ranks above the target is concave rather than being flat as in~(b). A related mean field game is considered in~\cite{BayCviZhang.19}, with diffusion instead of Poissonian dynamics. Both~(a) and~(b) turn out to be correct in the mean field setting. The $n$-player game is not tractable and its optimal design was not studied. In the light of the present work, one should not take for granted that the shape is analogous to the mean field limit.

\paragraph{Mean Field Approximation.}
In the above discussion, the mean field model is formulated directly as a game with infinitely many players. To connect this model rigorously with the $n$-player game, we show in Theorem~\ref{th:equilibConvergence} that for any given reward function, the unique $n$-player equilibrium for the induced reward converges to the mean field counterpart. Moreover, the value function of a player in the $n$-player game converges locally uniformly to the value function in the mean field game. This way of connecting the two models is classical in the mean field game literature starting with~\cite{LasryLions.06a}; see in particular \cite{Bardi.12, CardaliaguetDelarueLasryLions.15, CarmonaDelarueLacker.17, Fischer.14,Lacker.14, Lacker.18b}.
Another way of connecting the two models, going back to~\cite{HuangMalhameCaines.06}, is to fix the optimal strategy from the mean field equilibrium and consider it in the $n$-player game for large~$n$. Consistent with a broad literature (among others, \cite{CampiFischer.18,CarmonaDelarue.13,CarmonaDelarueLacker.17,CarmonaLacker.15,CecchinFischer.18}), we show in Theorem~\ref{th:epsEquilibrium} that for any continuous reward function, this control induces an $\eps$-Nash equilibrium for large~$n$; that is, players cannot improve their expected performance by more than~$\eps$ through unilateral deviations from the mean field strategy. Surprisingly,  continuity is necessary: any discontinuity in the reward function is shown to rule out the $\eps$-Nash equilibrium property for large~$n$. A discontinuity in reward leads to a gap in the support of the mean field equilibrium distribution. 
Due to a knife-edge phenomenon in the sampling for large but finite~$n$, a player can  improve substantially by unilaterally stopping inside the gap with a well-chosen distribution. In the study of diffusive mean field games with absorption, \cite[Section~7]{CampiFischer.18} described an example with degenerate volatility where the $\eps$-Nash equilibrium property fails. The degeneracy is exogenously chosen so that absorption cannot occur in the $n$-player game, but will occur in the mean field game, therefore creating a disconnect between the two. In~\cite{AnkirchnerKaziTaniWendtZhou.21}, on the other hand, the $\eps$-Nash equilibrium property always holds, despite the reward being discontinuous, because the dynamics of the game itself (nondegenerate volatility) guarantee a smooth equilibrium. The models of~\cite{CecchinDaiPraFischerPelino.18, DelarueFoguenTchuendom.18, Nutz.16,SanMartinNutzTan.18} highlighted a different type of discrepancy where some mean field equilibria can fail to be limits of $n$-player equilibria. Those examples arise due to non-uniqueness of mean field equilibria, thus are orthogonal to the issues in the present work.

Next, we discuss the quality of the mean field approximation for the reward design problem (Section~\ref{se:convOfOptDesign}); here using the mean field proxy seems particularly attractive because the optimal $n$-player design was fully solved only for zero drift. Our numerical discussion uses the zero drift case, for that same reason. We observe that the optimal design for the $n$-player problem converges to the mean field counterpart. Moreover, the induced performance of the former in the $n$-player game converges to the performance of the latter in the mean field game. This is consistent with~\cite{ElieMastroliaPossamai.19}, where the authors prove convergence of the optimal designs and induced performances for an example of their diffusive game---which is much more complex, yet smoother, than ours. But more importantly, and maybe surprisingly, the \emph{quality of the mean field proxy} from the point of view of the principal is strikingly poor in the present model (this aspect was not studied in~\cite{ElieMastroliaPossamai.19}): for moderate~$n$, the performance induced by the mean field optimizer is significantly inferior to the exact $n$-player optimal design. For large~$n$, the performance \emph{deteriorates} even further, eventually achieving only 50\% of the optimum. The tractability of the present model allows us to explain the reason for this phenomenon in detail. In the literature, mean field approximations are often applied in finite-player games without further analysis. The present study may offer the message that the quality of the approximation warrants consideration, especially when smoothness is not guaranteed, and that the mean field model can yield an oversimplified picture of the $n$-player game in some cases.

\section{Mean Field Equilibrium}\label{se:MFE}

In this section we define the mean field contest as a game with a continuum of players and prove that there exists a unique Nash equilibrium. It will be shown in Section~\ref{se:MFconvergence} that this equilibrium is indeed the limit of associated $n$-player games as $n\to\infty$. Throughout, we fix a \emph{reward function,} defined as a right-continuous and decreasing\footnote{Increase and decrease are understood in the non-strict sense in this paper.} function $R:[0,1]\rightarrow \R_+$ satisfying $R(0)>R(1-)=R(1)$. It will be shown in Remarks~\ref{rk:LC-necessary} and~\ref{rk:RC-necessary}, respectively, that left-continuity at the last rank is essential for uniqueness of the equilibrium whereas right-continuity is essential for existence.

Each infinitesimal player~$i$ privately observes a drifted Brownian motion $X_{t}^{i}=x_{0}+\mu t + \sigma W_{t}^{i}$ with absorption at $x=0$ and chooses a (possibly randomized) stopping time $\tau_{i}<\infty$. The initial value $x_{0}\in(0,\infty)$, drift $\mu\in\R$ and dispersion $\sigma\in(0,\infty)$ are identical across players whereas the Brownian motions are independent. Let $Y_{i}=X^{i}_{\tau_{i}}$ be the position at stopping. If $Y_{i}$ are i.i.d.\ across players with law~$F$, the empirical distribution of $(Y_{i})$ is a.s.\ equal to~$F$, by the Exact Law of Large Numbers (cf.\ Remark~\ref{rk:ELLN}). That is, if all players  choose the same stopping distribution, then the collection of players (deterministically) ends up distributed accordingly. Hence the rank of player~$i$ if she stops at $x$ while all other players stop according to distribution~$F$, is defined as $1-F(x)$.\footnote{We use the same symbol for the distribution and its cdf when there is no danger of confusion. Note that if $F$ has an atom at $x$, many players may share the same rank.} If $F$ does not have an atom at~$x$, meaning that there are no ties at this rank, she receives the reward $R(1-F(x))$. Otherwise she receives the average of $R(1-y)$ over $y\in [F(x-), F(x)]$, which is equivalent to splitting ties uniformly at random. Thus, writing
$$
g(y):=R(1-y),
$$
the payoff $\xi^F(x)$ for stopping at $x$ if all other players use $F$ is
\begin{equation}\label{eq:defxi}\xi^F(x)=
\begin{cases}
g(F(x)) & \text{ if } F(x)=F(x-),\\
\frac{1}{F(x)-F(x-)}\int_{F(x-)}^{F(x)} g(y)dy & \text{ if } F(x)>F(x-).
\end{cases}
\end{equation}

The set $\cF$ of distributions that are \emph{feasible}, i.e., can be attained by stopping $X^{i}$ with a randomized stopping time, is characterized through Skorokhod's embedding theorem and the scale function~$h$ of $X:=X^{i}$, as observed in~\cite{SeelStrack.13}.

\begin{lemma}\label{le:attainableDistrib}
  The set $\cF$ consists of all distributions $F$ on $[0,\infty)$ satisfying 
  $\int h dF=1$ if $\mu> 0$ and $\int h dF\le 1$ if $\mu\le 0$, respectively, where
\begin{equation}\label{eq:defh}
  h(x)=h_{x_0}(x)=\begin{cases}
  \frac{\exp\left(\frac{-2\mu x}{\sigma^{2}}\right)-1}{\exp\left(\frac{-2\mu x_{0}}{\sigma^{2}}\right)-1} & \text{ if } \mu\neq 0,\\
  \frac{x}{x_0} & \text{ if } \mu=0.
  \end{cases}
\end{equation}
\end{lemma}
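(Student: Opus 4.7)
The plan is to combine It\^o's formula with Skorokhod's embedding theorem, exploiting the fact that $h$ is, up to affine normalization, the scale function of $X$: a direct computation shows $\tfrac{1}{2}\sigma^{2}h'' + \mu h' = 0$ on $(0,\infty)$, with $h(0)=0$ and $h(x_0)=1$. Since $X$ is absorbed at $0$, It\^o's formula identifies $M_t := h(X_t)$ as a continuous nonnegative local martingale with $M_0 = 1$, stopped upon absorption. When $\mu > 0$, the function $h$ is bounded on $[0,\infty)$ by $(1 - e^{-2\mu x_0/\sigma^{2}})^{-1}$, so $M$ is a bounded (hence uniformly integrable) martingale; when $\mu \le 0$, $h$ is unbounded but $M$ is still a nonnegative supermartingale.

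For the necessity direction I would fix a finite stopping time $\tau$ with $X_\tau \sim F$. In the case $\mu > 0$, optional sampling applied to the bounded martingale $M$ immediately gives $\int h\, dF = \E[h(X_\tau)] = 1$. In the case $\mu \le 0$, optional sampling for the nonnegative supermartingale $M$ applied at $\tau \wedge n$, combined with Fatou's lemma as $n \to \infty$, yields $\int h\, dF \le 1$.

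For sufficiency I would push $F$ forward through $h$ to obtain a distribution $\tilde F$ on $[0, h(\infty)]$ whose mean equals $\int h\, dF$ (exactly $1$ if $\mu > 0$ and at most $1$ if $\mu \le 0$). By Dambis' theorem, $M$ is a time-changed Brownian motion started at $1$ and absorbed at the endpoints of $[0, h(\infty)]$. Skorokhod's embedding theorem, applied on the appropriate interval with the appropriate absorbing boundary, produces a finite stopping time that realizes $\tilde F$ in this Brownian motion. Pulling it back through the time change and the strictly increasing bijection $h$ yields a finite stopping time $\tau$ for $X$ with $X_\tau \sim F$, showing $F \in \cF$.

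The main obstacle lies in the $\mu \le 0$ case. There $h$ is unbounded, $M$ is only a supermartingale, and the target $\tilde F$ is supported on a half-line with mean possibly strictly less than $1$. One must verify that Skorokhod's construction on $[0,\infty)$ with one-sided absorption at the origin accommodates this defect by routing the missing mass $1-\int h\, dF$ into the event that the embedding Brownian motion hits $0$ before realizing the prescribed law on $(0,\infty)$. This is precisely the mechanism that degenerates the equality $\int h\, dF = 1$ (when $\mu > 0$) into the inequality $\int h\, dF \le 1$ (when $\mu \le 0$).
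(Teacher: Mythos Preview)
The paper does not give its own proof of this lemma; it simply attributes the result to Hall (1969) and points to Ob{\l}{\'o}j's survey for background, noting that the extension to the absorbing boundary is immediate. Your sketch is precisely the standard argument one finds in those references: transform by the scale function so that $M_t=h(X_t)$ becomes a (local) martingale, use optional sampling/Fatou for necessity, and apply a time change together with Skorokhod's embedding for sufficiency. So there is nothing to compare---your approach \emph{is} the approach the paper is citing.

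One minor imprecision: in the $\mu\le 0$ case your phrasing about ``routing the missing mass $1-\int h\,dF$ into the event that the embedding Brownian motion hits~$0$'' is not quite the mechanism. Adding mass at $0$ does not change $\int h\,dF$ since $h(0)=0$, so there is no ``defect'' to absorb that way. The correct statement is that any law $\tilde F$ on $[0,\infty)$ with mean $m\le 1$ can be embedded in Brownian motion started at~$1$ and absorbed at~$0$: first run the Brownian motion down to level~$m$ (which happens before hitting~$0$, as $0<m\le 1$), and then apply an embedding such as Az\'ema--Yor, which for targets supported on $[0,\infty)$ stops the path at a nonnegative level and hence never crosses~$0$. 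With that adjustment the argument is complete.
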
 

This result is due to to \cite{Hall.69}.\footnote{See~\cite[Section~9]{Obloj.04} for general background and a derivation. The extension to the present case with absorbing boundary is immediate.} We say that $F\in\cF$ is a mean field \emph{equilibrium} if no player is incentivized to deviate from $F$; that is, $\int \xi^F \,dF\geq \int \xi^F \,d\tF$ for all $\tF\in\cF$. The associated value function $u(x)$ is defined as the supremum expected reward achievable for a player starting at level $x$ (instead of $x_{0}$) if all others use~$F$.
Denote the average reward by $\bar R=\int_0^1 R(r)dr$ and set
\[
\bar \mu_\infty:=\frac{\sigma^2}{2x_0}\log\left(\frac{R(0)-R(1)}{R(0)-\bar R}\right)>0. \]
In the following result, $g^{-1}$ denotes the \emph{right}-continuous inverse
\[g^{-1}(z)=\inf\{y\in[0,1]: g(y)>z\}~~\mbox{for}~~z\in [R(1), R(0)), \qquad g^{-1}(R(0)):=1.\]

\begin{theorem}\label{thm:MFE}
Let $\mu<\bar\mu_\infty$. There exists a unique equilibrium. Its cdf is
\begin{equation}\label{eq:MFE}
F^\ast(x)=g^{-1}\left([R(1)+(\bar R-R(1)) h(x)] \wedge R(0)\right),
\end{equation}
where $g^{-1}$ is the right-continuous inverse of $g$, and the equilibrium value function is
$$u^*(x)=[R(1)+ (\bar R-R(1))h(x) ]\wedge R(0).$$ In particular, the equilibrium has compact support $[0,\bar x]$ for $\bar x = h^{-1}(\frac{R(0)-R(1)}{\bar R - R(1)})$ and its atoms are in one-to-one correspondence with intervals where the reward $R$ is constant.
\end{theorem}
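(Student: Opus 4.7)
The plan is to split the argument into (i) existence via direct verification of the explicit Ansatz, and (ii) uniqueness following the two-step strategy outlined in the introduction, and finally to read off the support and atom descriptions from the closed-form expressions.

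For \textbf{existence} I would begin by verifying that the candidate $F^*$ from \eqref{eq:MFE} lies in the feasible set $\cF$ of Lemma~\ref{le:attainableDistrib}. Since $u^*(x)=g(F^*(x))$ away from the jumps of $g^{-1}$, the change of variables $\int_0^1 g(y)\,dy = \bar R$ reduces $\int h\,dF^*$ to a direct computation; the hypothesis $\mu<\bar\mu_\infty$ is exactly what guarantees that $\bar x := h^{-1}((R(0)-R(1))/(\bar R-R(1)))$ is finite, so $F^*$ has compact support and the feasibility condition is met. The Nash property then follows from a martingale argument: $u^*$ is the minimum of the affine function $R(1)+(\bar R-R(1))h$ and the constant $R(0)$, so since $h(X_t)$ is a martingale up to absorption, $u^*(X_t)$ is a supermartingale. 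Combined with the pointwise inequality $\xi^{F^*}\le u^*$, this yields $\E[\xi^{F^*}(X_\tau)]\le u^*(x_0)$ for every feasible $\tau$, while stopping according to $F^*$ attains the upper bound because $u^*=\xi^{F^*}$ on $\supp(F^*)$---the averaging in \eqref{eq:defxi} is precisely what makes the two sides agree across the atoms produced by the jumps of $g^{-1}$.

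For \textbf{uniqueness}, step one (Section~\ref{se:constantRewardsAndAtoms}) establishes the rigid correspondence ``equal pay iff equal stopping level'': in any equilibrium $F$, two distinct levels in $\supp(F)$ producing the same value of $\xi^F$ must in fact coincide (creating an atom), and conversely gaps in $\supp(F)$ correspond to jumps of $R$. The idea is that equal expected payoffs combined with the martingale structure of $h(X)$ would otherwise allow mass to be rearranged within an interval without changing expected reward, which is incompatible with individual optimality. Step two (Section~\ref{se:uniquenessByMinimization}) turns the remaining freedom into an optimal-stopping problem: the representative player's value $u^F$ is the smallest superharmonic majorant of $\xi^F$, and the equilibrium condition $u^F(x_0)=\int \xi^F\,dF$ together with the structural constraints from step one forces $u^F$ to be concave and affine in $h$ with slope fixed by the feasibility constraint and cap fixed by $R(0)$. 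This pins down $u^F=u^*$ and hence $F=F^*$. Once $F^*$ is known to be the unique equilibrium, the last two assertions are immediate: compact support on $[0,\bar x]$ follows because $R(1)+(\bar R-R(1))h(x)\ge R(0)$ precisely for $x\ge\bar x$ where $g^{-1}(R(0))=1$, and $g^{-1}$ has a jump exactly at each value $g$ takes on a non-trivial interval, so each flat stretch of $R$ produces one atom of $F^*$ and no other atoms can occur.

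\textbf{Main obstacle.} The hard part is the uniqueness argument in the presence of atoms. The averaging convention in \eqref{eq:defxi} means standard smooth optimal-stopping and dynamic programming arguments do not apply directly, and the combinatorics linking flat stretches of $R$ to atoms of $F$ must be carefully interlocked with the minimization step for $u^F$. Existence, by comparison, reduces to bookkeeping once the Ansatz has been guessed.
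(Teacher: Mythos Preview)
Your plan is correct and matches the paper's approach: existence is by direct verification (feasibility via the change of variables $\int h\,dF^*=1$, then the supermartingale bound $\xi^{F^*}\le u^*$ and attainment of $\bar R$), and uniqueness proceeds exactly through the two steps of Sections~\ref{se:constantRewardsAndAtoms} and~\ref{se:uniquenessByMinimization} that you describe, with the main technical burden being the interaction of atoms with the minimization argument. One small note: for the attainment step in existence the paper simply invokes symmetry (all players using $F^*$ each receive $\bar R$), which sidesteps having to check your pointwise claim $u^*=\xi^{F^*}$ on $\supp(F^*)$---your version is also fine but requires the extra observation that $u^*(x)$ avoids the jump-gaps of $g$ precisely on the support.
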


\begin{remark}
The equilibrium distribution~\eqref{eq:MFE} is invariant under affine transformations of the reward~$R$. In particular, we may normalize the reward to satisfy $R(1)=0$ and $\bar R=1$ without loss of generality.
\end{remark}

\begin{remark}\label{rk:LC-necessary}
  The condition $R(1-)=R(1)$ is not necessary for the existence result in Theorem~\ref{thm:MFE} (it is not used in the proof), but it is crucial for uniqueness. Indeed, we claim that infinitely many equilibria arise whenever $R(1-)> R(1)$. To see this, fix a constant $R(1-)\geq\beta> R(1)$ and define the new reward $\tR$ by $\tR(1)=\beta$ and $\tR(r)=R(r)$ for $r<1$. We assume that $\beta$ is so that the constant $\bar\mu_{\infty}$ associated with~$\tR$ still satisfies~$\mu<\bar\mu_{\infty}$. As mentioned above, the reward~$\tR$ admits an equilibrium~$\tilde F^*$ as described Theorem~\ref{thm:MFE}, and inspection of the formula shows that $\tilde F^*$ differs from the equilibrium~$F^*$ corresponding to~$R$. More generally, $\tilde F^*$ is different for any two choices of~$\beta$. To prove the claim, we argue that $\tilde F^*$ is also an equilibrium for~$R$. If all players use the same stopping distribution, their value functions are the same under both rewards because achieving the last rank is a nullset for any player. However the rewards differ in the analysis of unilateral deviations: the inequality $\tR(1)> R(1)$ implies that if a player is not incentivized to deviate under~$\tR$, the same holds under~$R$. In particular, $\tilde F^*$ is also an equilibrium under~$R$, proving the claim. Conversely, $F^{*}$ need not be an equilibrium under~$\tR$, as can be seen from our uniqueness result for $\beta=R(1-)$.
\end{remark}

\begin{remark}\label{rk:ELLN}
  The framework of~\cite{Sun.06} allows for the rigorous construction of a continuum of (a.e.) independent processes satisfying an Exact Law of Large Numbers. A short summary of the pertinent results can be found, e.g., in \cite[Section~3]{Nutz.16}. Alternately to explicitly formulating the game with a continuum of players, one can also directly analyze the problem of a ``representative'' player facing a distribution, as it is sometimes done in the literature on mean field games---this corresponds to taking the Exact Law of Large Numbers as a given.
\end{remark} 

\section{Proof of Theorem~\ref{thm:MFE}}\label{se:MFEproof}

We first show by a direct verification argument that the stated distribution is indeed an equilibrium. The proof of uniqueness occupies the remainder of the section.

\begin{proof}[Proof of Theorem~\ref{thm:MFE}---Existence.]
  In view of 
  \begin{align*}
  \int h dF^*&= \int_0^{h(\infty)} (1-F^*\circ h^{-1}(w))dw
  = \int_0^{\frac{g(1)-g(0)}{\bar R-g(0)}} \left(1-g^{-1}(g(0)+(\bar R-g(0))w])\right) dw\\
  &= \frac{1}{\bar R-g(0)}\int_{g(0)}^{g(1)} \left(1-g^{-1}(z)\right) dz =\frac{\bar R-g(0)}{\bar R-g(0)}=1,
  \end{align*}
  Lemma~\ref{le:attainableDistrib} yields that $F^{*}\in\cF$. To see that $F^{*}$ is an equilibrium, fix some player~$i$ and suppose that all other players stop according to $F^\ast$. Using the property $g(g^{-1}(z))\leq z$ of the right-continuous inverse of the left-continuous function $g(y)=R(1-y)$, we have
  \[
    \xi^{F^*}(x)\leq g(F^*(x)) \leq [R(1)+(\bar R-R(1)) h(x)] \wedge R(0) =:\varphi(x).
  \]
  By It\^o's formula and Jensen's inequality, $\varphi(X_{t})$ is a bounded supermartingale. Hence, optional sampling  implies that for any finite stopping time $\tau$,
\[E[\xi^{F^\ast}(X_\tau)]\le E[\varphi(X_\tau)] \leq \varphi(x_{0})=\bar R.\]
On the other hand, player $i$ can attain $\bar R$ by choosing $F^\ast$, by symmetry. This shows that $F^\ast$ is optimal for player~$i$ and hence that $F^\ast$ is an equilibrium.
\end{proof}

\subsection{Relating Constant Rewards to Atoms, and Jumps in Reward to Gaps in Support}\label{se:constantRewardsAndAtoms}

We first relate atoms in equilibrium distributions to intervals of constancy of the reward (and hence of~$g$). Technical details aside, the message is that in equilibrium, equal pay must correspond to equal performance: ranks with the same reward are occupied by players that stop at the same level, and vice versa.

We do not yet impose the continuity properties of $R$, which will allow us to prove that they are important for the existence of equilibria. Instead, $R$ is any decreasing function in this subsection, which of course implies that its discontinuities are of jump-type.

\begin{lemma}\label{lemma:atom}
Let $F\in\cF$ be a mean field equilibrium. If $F$ has an atom at $x_1\in\R_+$, then $g$ is  constant on $(F(x_1-), F(x_1)]$. As a result, we have $\xi^F(x)=g(F(x))$ for all $x\in\R_{+}$. %
\end{lemma}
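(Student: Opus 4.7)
The plan is to argue by contradiction: assume $F\in\cF$ is an equilibrium with an atom at some $x_1>0$ (the main case), set $a=F(x_1-)$ and $b=F(x_1)$, and suppose $g(y):=R(1-y)$ is not constant on $(a,b]$. I will construct a feasible deviation $\tilde F\in\cF$ with strictly larger payoff $\int\xi^F\,d\tilde F$, contradicting the equilibrium property.

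The first step is to upgrade the failure of constancy to a strict inequality $\xi^F(x_1)<g(b)$. Since $R$ is decreasing and right-continuous, $g$ is increasing and left-continuous, so non-constancy on $(a,b]$ yields some $y_0\in(a,b]$ with $g(y_0)<g(b)$, and splitting the integral in~\eqref{eq:defxi} at $y_0$ gives
\[
\xi^F(x_1)\;=\;\frac{1}{b-a}\int_a^b g(y)\,dy \;\leq\; \frac{g(y_0)(y_0-a)+g(b)(b-y_0)}{b-a} \;<\; g(b).
\]

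The deviation is built via strong Markov: instead of stopping at $x_1$, let the process continue until the first exit $\sigma_\eps$ from $(0,x_1+\eps)$. The atom mass $b-a$ is then redistributed as $(b-a)h(x_1)/h(x_1+\eps)$ at $x_1+\eps$ and the complementary mass at $0$, by the standard scale-function computation. Feasibility is automatic: $h$-harmonicity gives $h(x_1)=E^{x_1}[h(X_{\sigma_\eps})]$, so the perturbation is $\int h$-neutral and $\tilde F$ therefore satisfies the same side of the constraint in Lemma~\ref{le:attainableDistrib} as $F$. The induced payoff change equals
\[
(b-a)\!\left[\frac{h(x_1)}{h(x_1+\eps)}\,\xi^F(x_1+\eps) + \Big(1-\frac{h(x_1)}{h(x_1+\eps)}\Big)\,\xi^F(0) - \xi^F(x_1)\right].
\]
Choosing $\eps_n\to 0^+$ along continuity points of $F$ (possible as $F$ has only countably many atoms), $\xi^F(x_1+\eps_n)=g(F(x_1+\eps_n))\geq g(b)$ by monotonicity of $g$, $h(x_1)/h(x_1+\eps_n)\to 1$ by continuity of $h$, and $\xi^F(0)$ is bounded; hence the bracket tends to at least $g(b)-\xi^F(x_1)>0$, contradicting the equilibrium property for small $\eps_n$.

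The second claim is then immediate: at continuity points of $F$, $\xi^F(x)=g(F(x))$ by~\eqref{eq:defxi}, and at atoms the just-established constancy of $g$ on $(F(x-),F(x)]$ collapses the integral average to the common value $g(F(x))$. The main obstacle I anticipate is the edge case $x_1=0$, where the above perturbation is unavailable because $0$ is absorbing. A remedy is a compensated deviation that moves a fraction of the atom from $0$ to a small $\delta>0$ while offsetting the resulting growth of $\int h$ by transferring a vanishingly small mass from a higher level of $F$ down to $0$; as $\delta\downarrow 0$ the compensating transfer is negligible thanks to $h(0)=0$, whereas the first-order gain from moving mass away from $0$ persists by the same non-constancy argument.
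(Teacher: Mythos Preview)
Your argument is correct and follows the same perturbation idea as the paper: shift the atom mass at $x_1$ slightly upward, compensate with mass at the origin, and exploit the strict gap $\xi^F(x_1)<g(b)\le\xi^F(x_1+\eps)$. The paper's implementation differs in one convenient respect: instead of splitting the atom via exit probabilities $p=h(x_1)/h(x_1+\eps)$, it moves the \emph{entire} atom to $x_1+\eps$, then rescales the whole measure by a factor $\lambda_\eps\in(0,1)$ and places the deficit at~$0$:
\[
\nu_\eps=\lambda_\eps\bigl(\nu+(b-a)(\delta_{x_1+\eps}-\delta_{x_1})\bigr)+(1-\lambda_\eps)\delta_0,
\]
with $\lambda_\eps$ chosen so that $\int h\,d\nu_\eps=\int h\,d\nu$. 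This construction works uniformly for every $x_1\ge0$, so the edge case $x_1=0$ that you correctly flag and patch separately never arises. (The paper also does not restrict $\eps$ to continuity points of~$F$, since $\xi^F(x_1+\eps)\ge g(b)$ holds regardless: if $F$ has an atom at $x_1+\eps$, the tie-breaking average in~\eqref{eq:defxi} is over an interval lying entirely to the right of~$b$.) Your strong-Markov framing has the advantage of corresponding transparently to an actual modified stopping rule for $x_1>0$; the paper's multiplicative rescaling buys uniformity at the cost of that interpretation.
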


\begin{proof}
Set $y_0:=F(x_1-)$ and $y_1:=F(x_1)$. Let $\nu$ be the measure associated with~$F$. Consider for each $\eps>0$ the perturbed measure
\[\nu_\eps:=\lambda_\eps (\nu+(y_1-y_0)(\delta_{x_1+\eps}-\delta_{x_1}))+(1-\lambda_\eps)\delta_0,\]
where $\lambda_\eps\in (0,1)$ is chosen so that
\[\int h(x) d\nu_\eps(x)=\lambda_\eps\left(\int h(x)d\nu(x)+(y_1-y_0)[h(x_1+\eps)-h(x_1)]\right)=\int h(x)d\nu(x).\]
This ensures that $\nu_\eps\in \cF$.
Suppose that $g$ is not constant on $(y_0,y_1]$; then
\begin{align*}
\xi^F(x_1+\eps)-\xi^F(x_1) \ge g(y_1)-\frac{1}{y_1-y_0}\int_{y_0}^{y_1} g(y)dy=:\eta>0,
\end{align*}
where $\eta$ is clearly independent of $\eps$. 
This implies
\begin{align*}
&\int \xi^F(x) d\nu_\eps(x) - \int \xi^F(x)d\nu(x)\\
&=\lambda_\eps\left(\int \xi^F(x)d\nu(x)+(y_1-y_0)[\xi^F(x_1+\eps)-\xi^F(x_1)]\right)+(1-\lambda_\eps)\xi(0) - \int \xi^F(x)d\nu(x)\\
& = (1-\lambda_\eps)\left(\xi^F(0)-\int \xi^F(x)d\nu(x)\right)+\lambda_\eps (y_1-y_0)[\xi^F(x_1+\eps)-\xi^F(x_1)]\\
&\ge (1-\lambda_\eps)\left(\xi^F(0)-\int \xi^F(x)d\nu(x)\right)+\lambda_\eps (y_1-y_0)\eta.
\end{align*}
Using $\lim_{\eps \rightarrow 0+}\lambda_\eps =1$, $y_1-y_0>0$ and $\eta>0$, we obtain that 
$\int \xi^F dF_\eps-\int \xi^F dF>0$ for $\eps$ sufficiently small, contradicting the assumption that $F$ is an equilibrium.
Finally, if $g$ is constant on $(y_0, y_1]$, it is clear that $\xi^F(x_1)=g(y_1)=g(F(x_1))$.
\end{proof}

As $g$ is increasing, each level set $\{g=c\}$ is an interval. If the interval has positive length, we say that $g$ has a \emph{flat segment} at level $c$. In all that follows, we denote by $F^{-1}(y)=\inf\{x: F(x)\geq y\}$ the \emph{left}-continuous inverse (or quantile function) of~$F$.

\begin{lemma}\label{lemma:g-flat}
Suppose $g$ has a flat segment at level $c$, so that $y_1=\inf\{y\in[0,1]: g(y)=c\}$ and $y_2=\sup\{y\in[0,1]: g(y)=c\}$ satisfy $y_{1}<y_{2}$. Suppose $F\in\cF$ is a mean field equilibrium, define $x^+_1=F^{-1}(y_1+)$ and $x_2=F^{-1}(y_2)$. Then we must have $x^+_1=x_2$. Moreover, $F(x^+_1-)=y_1$ and $F(x^+_1)=y_2$.
\end{lemma}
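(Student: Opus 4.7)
The plan is to argue by contradiction. Assume $x_1^+ < x_2$; then $F(x_1^+) < y_2 \leq F(x_2)$, so $F$ has positive mass $m := F(x_2) - F(x_1^+) > 0$ on $(x_1^+, x_2]$, and $F(x) \in (y_1, y_2)$ throughout $(x_1^+, x_2)$, so Lemma~\ref{lemma:atom} yields $\xi^F(x) = g(F(x)) = c$ on $(x_1^+, x_2)$. I would first rule out the ``atom-past-$y_2$'' scenario $F(x_2) > y_2$: Lemma~\ref{lemma:atom} would force $F(x_2-) = y_2$ and make $g$ constant at some level $c' > c$ on $(y_2, F(x_2)]$. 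Then from any $x \in \supp(F) \cap (x_2 - \delta, x_2)$ with $\delta$ small, the hitting-time strategy that exits $[0, x_2]$ yields expected payoff $(h(x)/h(x_2))\, c' + (1 - h(x)/h(x_2))\, R(1)$, which for $x$ close enough to $x_2$ exceeds $c = \xi^F(x)$, contradicting $\supp(F) \subseteq \{u^F = \xi^F\}$. Hence $F(x_2) = y_2$ and $\xi^F \equiv c$ on $(x_1^+, x_2]$.

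Assuming $y_2 < 1$, fix $x_+ > x_2$ with $F(x_+) > y_2$ so that $g(F(x_+)) > c$, and consider the perturbation
\[
\tilde \nu := \nu - \nu|_{(x_1^+, x_2]} + \lambda_1 \delta_{x_1^+} + \lambda_+ \delta_{x_+},
\]
with $\lambda_1, \lambda_+ \geq 0$ uniquely determined by $\lambda_1 + \lambda_+ = m$ and $\lambda_1 h(x_1^+) + \lambda_+ h(x_+) = \int_{(x_1^+, x_2]} h\,d\nu$. This preserves both total mass and $h$-integral, hence $\tilde\nu \in \cF$, and the payoff change reads
\[
\Delta = \lambda_1\bigl(\xi^F(x_1^+) - c\bigr) + \lambda_+\bigl(g(F(x_+)) - c\bigr).
\]
Classifying the structure of $F$ at $x_1^+$ via Lemma~\ref{lemma:atom} shows either $\xi^F(x_1^+) = c$---in which case $\Delta > 0$ directly contradicts equilibrium---or $\xi^F(x_1^+) = g(y_1) < c$, which forces $g$ to have an upward jump at $y_1$. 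In the latter sub-case I would switch to an optimal-stopping argument: $F$ strictly increases immediately above $x_1^+$ (or has an atom there), so $x_1^+ \in \supp(F)$, yet the hitting-time strategy from $x_1^+$ that exits $[0, x_3]$ for $x_3 \in (x_1^+, x_2)$ pays $R(1) + (h(x_1^+)/h(x_3))(c - R(1))$, which tends to $c$ as $x_3 \downarrow x_1^+$; thus $u^F(x_1^+) \geq c > g(y_1) = \xi^F(x_1^+)$, contradicting $\supp(F) \subseteq \{u^F = \xi^F\}$. The degenerate case $y_2 = 1$ requires a variant: first concentrate the spread mass in $(x_1^+, x_2]$ at $x_1^+$ (which leaves payoff unchanged but frees $h$-budget since the $h$-integral strictly decreases), then redeploy the freed budget to move a small amount of low-reward mass---which must exist because $\bar R < c$ in this case forces $\xi^F < c$ on a set of positive $F$-mass---up to $x_1^+$, producing a strict gain.

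Once $x_1^+ = x_2 =: x^*$ is established, the identities $F(x^*-) = y_1$ and $F(x^*) = y_2$ follow at once: the bounds $F(x^*-) \leq y_1 < y_2 \leq F(x^*)$ from the definitions force an atom at $x^*$, and Lemma~\ref{lemma:atom} then requires $g$ to be constant on $(F(x^*-), F(x^*)]$. The maximality of $(y_1, y_2]$ as a level set of $g$ at value $c$---encoded in $y_1 = \inf\{g = c\}$ and $y_2 = \sup\{g = c\}$---forces $F(x^*-) \geq y_1$ and $F(x^*) \leq y_2$, yielding equality in both. The main obstacle is the dichotomy at $x_1^+$: the feasibility-preserving perturbation works cleanly when $g$ is continuous at $y_1$ but fails precisely when $g$ has an upward jump there, requiring the separate optimal-stopping characterization of the support to close the argument, with additional care for the boundary case $y_2 = 1$.
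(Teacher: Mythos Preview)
Your overall strategy---perturb the putative equilibrium by relocating the mass in $(x_1^+,x_2]$---is the same as the paper's, but you anchor the perturbation at $x_1^+$ itself, whereas the paper uses $x_1^\eps:=F^{-1}(y_1+\eps)$ for small $\eps>0$. This seemingly minor choice is exactly what creates your Case~2 and the gap in it.

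The gap is in the optimal-stopping step for Case~2. You conclude from $x_1^+\in\supp(F)$ and $u^F(x_1^+)\geq c>g(y_1)=\xi^F(x_1^+)$ that the equilibrium property is violated, invoking ``$\supp(F)\subseteq\{u^F=\xi^F\}$''. But the equilibrium condition only yields $F(\{u^F=\xi^F\})=1$; pointwise containment of the \emph{topological} support would require the stopping region to be closed, which fails precisely here. Indeed, in Case~2 one can have $F$ continuous at $x_1^+$ with $F(x_1^+)=y_1$; then $\xi^F=g\circ F$ jumps upward at $x_1^+$ (from $g(y_1)$ to $c$), so $\xi^F$ is not upper semicontinuous and $\{u^F=\xi^F\}$ is not closed at $x_1^+$. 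Since $F(\{x_1^+\})=0$, the inequality $u^F(x_1^+)>\xi^F(x_1^+)$ at this single point contradicts nothing. Your earlier optimal-stopping argument (ruling out $F(x_2)>y_2$) survives because there the strict inequality holds on an interval of positive $F$-mass; your Case~2 argument does not have this feature, and it is not obvious how to recover it---for instance, $F$ may be flat at level $y_1$ on a left neighbourhood of $x_1^+$, so there need not be any $F$-mass nearby from below.

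The paper sidesteps all of this by using $x_1^\eps$ as the relocation target: since $F(x_1^\eps)\in(y_1,y_2)$, one has $\xi^F(x_1^\eps)=c$ automatically, the perturbation is payoff-neutral at that endpoint, and the positive contribution from the high-payoff endpoint ($x_\beta$ in the paper, your $x_+$) gives the contradiction directly. Replacing $x_1^+$ by $x_1^\eps$ throughout your argument would close the gap and make the separate optimal-stopping detour unnecessary.
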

\begin{proof}
Let $\nu$ be the measure associated with $F$ and $x_1^\eps=F^{-1}(y_1+\eps)$. Suppose on the contrary that $x_2>x_1^\eps$ for some $\eps\in (0,y_2-y_1)$. Then $y_1< F(x^\eps_1)<y_2$ and thus $\xi^F(x^\eps_1)=c$. Consider the measure 
\[\zeta:=\nu|_{(x^\eps_1,x_2)}+(y_2-F(x_2-))\delta_{x_2}\]
with total mass $|\zeta|=y_2-F(x^\eps_1)>0$. We distinguish two cases:

(i) Case $y_2<1$. In this case, let $\beta\in (y_2,1)$ and $x_\beta:=F^{-1}(\beta)\in [x_2,\infty)$. For some $\lambda\in[0,1]$ to be determined later, define the measure
\[\nu_\lambda=\nu-\zeta+|\zeta| (\lambda \delta_{x^\eps_1}+(1-\lambda)\delta_{x_\beta}).\]
In words, $\nu_\lambda$ is obtained from $\nu$ by removing all mass on $(x^\eps_1,x_2)$, plus possibly an additional atom at $x_2$ so that the total removed mass is $y_2-F(x^\eps_1)$, and moving this mass to atoms at $x^\eps_1$ and $x_\beta$ according to weights $\lambda$ and $1-\lambda$. Clearly $\nu_\lambda$ is a probability measure supported on $\R_+$, and we have
\begin{align*}
\int hd\nu_\lambda-\int h d\nu 
&=-\int h d\zeta+|\zeta| \left(\lambda h(x^\eps_1)+ (1-\lambda)h(x_\beta)\right)\\
&=\lambda \int  \left[h(x^\eps_1)-h\right] d \zeta + (1-\lambda) \int  \left[h(x_\beta)-h\right] d \zeta.
\end{align*}
In view of $\int  \left[h(x^\eps_1)-h\right] d \zeta<0$ and $\int  \left[h(x_\beta)-h\right] d \zeta\ge 0$, we can choose $\lambda\in [0,1)$ so that $\int hd\nu_\lambda=\int h d\nu$. We then have $\nu_\lambda \in\cF$ by Lemma~\ref{le:attainableDistrib}. Using the optimality of $F$ and that $\xi^F(x)=c$ for all $x\in[x^\eps_1,x_2)$, %
\begin{align*}
0&\ge \int \xi^F d\nu_\lambda-\int \xi^F d\nu
=\lambda \int \left[\xi^F(x^\eps_1)-\xi^F\right] d\zeta +(1-\lambda)\int \left[\xi^F(x_\beta)-\xi^F\right] d\zeta\\
&= \lambda (c-\xi^F(x_2))\zeta\{x_2\}+(1-\lambda) \left[\xi^F(x_\beta)-c\right] \nu(x^\eps_1,x_2) +(1-\lambda)(\xi^F(x_\beta)-\xi^F(x_2))\zeta\{x_2\}.
\end{align*}
Lemma~\ref{lemma:atom} rules out the possibility that $F(x_2-)<y_2<F(x_2)$, so we must be in one of the following two subcases:

\begin{itemize}
\item [(i-a)] $F(x_2-)=y_2$. In this case, $\zeta\{x_2\}=0$ and
\[\int \xi^F d (\nu_\lambda-\nu)=(1-\lambda) \left[\xi^F(x_\beta)-c\right] |\zeta|.\]
Using %
$|\zeta|>0$ and $\xi^F(x_\beta)\ge g(\beta)>c$ and $\lambda<1$, we obtain the contradiction that $\int \xi^F d (\nu_\lambda-\nu)>0$.

\item[(i-b)] $F(x_2-)<y_2=F(x_2)$. In this case, Lemma~\ref{lemma:atom} implies $\xi^F(x_2)=g(F(x_2))=g(F(x_2-)+)=c$ and we reach the same contradiction:
\begin{align*}
\int \xi^F d (\nu_\lambda-\nu) &=(1-\lambda) \left[\xi^F(x_\beta)-c\right] \nu(x^\eps_1,x_2)+(1-\lambda)(\xi^F(x_\beta)-c)\zeta\{x_2\}\\
&=(1-\lambda) \left[\xi^F(x_\beta)-c\right] |\zeta|>0.
\end{align*}
\end{itemize}

(ii) Case $y_2=1$. Then $y_1>0$ as $g$ is not a.e.\ constant. Let $x_1:=F^{-1}(y_1) \le x^\eps_1$. We note that $\nu[0,x_1]\ge y_1>0$ and consider the measure 
\begin{align*}
\nu_{\lambda}&=\nu-\lambda \zeta -(1-\lambda)\nu|_{[0,x_1]}+\left\{\lambda |\zeta|+(1-\lambda) \nu[0,x_1]\right\}\delta_{x^\eps_1}
\end{align*}
where $\lambda \in [0,1)$ is again chosen so that $\int h d\nu_\lambda= \int hd\nu$. We have
\begin{align*}
0 \ge \int \xi^F d\nu_\lambda-\int \xi^F d\nu
&=\lambda \int \left[\xi^F(x^\eps_1)-\xi^F\right] d\zeta +(1-\lambda)\int \left[\xi^F(x^\eps_1)-\xi^F\right] d\nu|_{[0,x_1]}\\
&= \lambda (c-\xi^F(x_2))\zeta\{x_2\} +(1-\lambda)\int \left[c-\xi^F\right] d\nu|_{[0,x_1]}.
\end{align*}
Similarly as in Case~(i), one can show that either $\zeta\{x_2\}=0$ or $\xi^F(x_2)=c$, both of which lead to
\[0\ge (1-\lambda)\int \left[c-\xi^F\right] d\nu|_{[0,x_1]}\ge 0\]
and thus
$\int \left[c-\xi^F(x)\right] d\nu|_{[0,x_1]}=0.$
It follows that $\xi^F(x)=c$ for $\nu$-a.e.\ $x\in [0,x_1]$. On the other hand, the definitions of $x_1$ and $y_1$ imply that $F(x)<y_1$ and $\xi^F(x)<c$ for all $x<x_1$. So it must hold that either $x_1=0$ or $\nu[0,x_1)=0$. Both cases lead to $F(x_1-)=0$ and $\nu\{x_1\}\ge y_1>0$. But $F(x_1-)=0$ yields, by Lemma~\ref{lemma:atom}, that $\xi^F(x_1)=g(F(x_1-)+)=g(0+)<c$, whereas $\nu\{x_1\}>0$ implies 
$\xi^F(x_1)=c$, a contradiction. This completes the proof that $F^{-1}(y_1+)=F^{-1}(y_2)$.

Finally, let $x^+_1:=F^{-1}(y_1+)$. Clearly $F(x^+_1)=F(F^{-1}(y_2))\ge y_2$. For any $x<x^+_1$ and $\eps>0$, we have $x< x^\eps_1$ and $F(x)<y_1+\eps$. Passing to the limit then yields $F(x^+_1-)\le y_1<y_2$. As $g$ is constant on $(F(x^+_1-), F(x^+_1)]$, we must have $F(x^+_1-)=y_1$ and $F(x_1^+)=y_2$.
\end{proof}

\begin{remark}\label{rk:g-flat}
If in Lemma~\ref{lemma:g-flat} we also have $g(y_1)=c$, then the proof goes through with $x^\eps_1$ replaced by $x_1=F^{-1}(y_1)\vee 0$. (The reason for using $x_1^\eps$ is to have $\xi^F(x_1^\eps)=g(F(x_1^\eps))=c$.) As a result, we have $x_1=x^+_1=x_2$. In particular, if $y_1=0$, then $F(0)=y_2$.
\end{remark}

\begin{remark}\label{rk:RC-necessary}
Lemmas~\ref{lemma:atom} and~\ref{lemma:g-flat} imply that for the existence of a mean field equilibrium, it is necessary that $g$ be left-continuous at any level for which it contains a flat segment. In particular, if the reward function $R$ is piecewise constant, \emph{a mean field equilibrium can only exist if $R$ is right-continuous.}
\end{remark}

The feasibility constraint yields one equation to pin down the equilibrium. The best way to illustrate this is to go through a particular case of Theorem~\ref{thm:MFE} where the reward function is of cut-off type. That is the purpose of the next proposition---here, the feasibility constraint and the preceding results on atoms are already sufficient to uniquely identify the equilibrium.

\begin{proposition}\label{prop:MFG-cutoff}
Let $R(r)=\frac{1}{\alpha} 1_{[0,\alpha)}(r)$ for some $\alpha\in (0,1)$. Then the unique mean field equilibrium is given by the two-point distribution $\nu_\alpha:=(1-\alpha)\delta_0+\alpha \delta_{x_1}$ where $x_1$ is the unique point in $(x_0,\infty)$ with $h(x_1)=1/\alpha$.
\end{proposition}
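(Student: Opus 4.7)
The plan is to use three ingredients already available: the structure of the flat segments of $g$, the constraints derived in Section~\ref{se:constantRewardsAndAtoms} (Lemma~\ref{lemma:g-flat} and Remark~\ref{rk:g-flat}), and the feasibility condition from Lemma~\ref{le:attainableDistrib}. For the cutoff reward, $g(y)=R(1-y)$ equals $0$ on $[0,1-\alpha]$ and $1/\alpha$ on $(1-\alpha,1]$, so $g$ has precisely two flat segments. Let $F$ be any mean field equilibrium. Applying Remark~\ref{rk:g-flat} to the first flat segment (where $g(y_1)=g(0)=0$ coincides with the level $c=0$) gives $F(0)=1-\alpha$. Applying Lemma~\ref{lemma:g-flat} to the second flat segment (where $g(y_1)=0\ne c=1/\alpha$) produces a point $x^+:=F^{-1}((1-\alpha)+)=F^{-1}(1)$ with $F(x^+-)=1-\alpha$ and $F(x^+)=1$. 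Combining these, $F$ is constant equal to $1-\alpha$ on $[0,x^+)$ and jumps by $\alpha$ at $x^+$, so $F=(1-\alpha)\delta_0+\alpha\delta_{x^+}$.

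It remains to identify $x^+$. Since $h(0)=0$, Lemma~\ref{le:attainableDistrib} yields $\alpha h(x^+)\le 1$, with equality when $\mu>0$. For $\mu>0$ this immediately forces $h(x^+)=1/\alpha$, i.e.\ $x^+=x_1$. For $\mu\le 0$ feasibility alone is insufficient and the equilibrium property must be invoked: a direct computation from~\eqref{eq:defxi} gives $\xi^F(0)=0$ and $\xi^F(x^+)=1/\alpha$, so the equilibrium payoff is $\int\xi^F\,dF=1=\bar R$. If $h(x^+)<1/\alpha$, the unilateral deviation placing mass $\beta=1/h(x^+)>\alpha$ at $x^+$ and mass $1-\beta$ at $0$ is feasible (its $h$-integral equals $1$) and yields payoff $\beta/\alpha>1$, contradicting the equilibrium property. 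Hence $h(x^+)=1/\alpha$ in all cases, giving $x^+=x_1$.

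That $\nu_\alpha$ is in fact an equilibrium is a specialization of the existence portion of Theorem~\ref{thm:MFE}: with $R(1)=0$, $\bar R=1$ and $R(0)=1/\alpha$, one computes $g^{-1}(z)=1-\alpha$ for $z\in[0,1/\alpha)$ and $g^{-1}(1/\alpha)=1$, so formula~\eqref{eq:MFE} reduces precisely to $\nu_\alpha$. The main obstacle is the uniqueness argument in the case $\mu\le 0$: the one-sided feasibility inequality does not by itself pin down the atom location, and the equilibrium property must be used through a specifically chosen feasible deviation. Everything else is a mechanical application of the results from Section~\ref{se:constantRewardsAndAtoms}.
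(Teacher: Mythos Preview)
Your proof follows essentially the same route as the paper's: reduce to a two-point distribution via Lemma~\ref{lemma:g-flat} and Remark~\ref{rk:g-flat}, then pin down the atom location by a feasibility argument for $\mu>0$ and an explicit improving deviation for $\mu\le 0$. There is one small technical slip in the latter case: you take $\beta=1/h(x^+)$, but nothing established so far forces $x^+\ge x_0$, so $h(x^+)<1$ and $\beta>1$ have not been ruled out, in which case $(1-\beta)\delta_0+\beta\delta_{x^+}$ is not a probability measure. The paper sidesteps this by simply choosing any $\beta\in(\alpha,1)$ with $\beta h(x^+)\le 1$; since $h(x^+)<1/\alpha$ implies $\min(1,1/h(x^+))>\alpha$, such $\beta$ always exists and still gives payoff $\beta/\alpha>1$. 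Replacing your choice by $\beta=\min(1,1/h(x^+))$ (or anything strictly between $\alpha$ and this value) fixes the argument without further change.
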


\begin{proof}
We first derive a necessary condition for $F\in \cF$ to be an equilibrium. Let $x_\alpha=F^{-1}((1-\alpha)+)$. 
By Lemma~\ref{lemma:g-flat} and Remark~\ref{rk:g-flat}, we have $F(0)=1-\alpha$, $F(x_\alpha-)=1-\alpha$ and $F(x_\alpha)=1$. That is, the measure associated with $F$ must take the form
$\nu=(1-\alpha)\delta_0+\alpha \delta_{x_\alpha}$.
To determine $x_\alpha$, we first note that $x_\alpha\le x_1$, for otherwise $\int h d\nu > \int h d\nu_\alpha=1$, contradicting $\nu\in\cF$. Suppose $x_\alpha<x_1$ (which is only feasible if $\mu\le 0$), then there is $\beta\in (\alpha,1)$ such that $\nu'=(1-\beta)\delta_0+\beta \delta_{x_\alpha}$
is feasible. In view of  $\xi^F(x_\alpha)=g(F(x_\alpha))=g(1)>g(1-\alpha)=\xi^F(0)$, the distribution $\nu'$ is strictly preferable to $\nu$ when the other players choose $\nu$. As a result, $x_\alpha=x_1$, which uniquely identifies~$F$.
To check that $F$ is indeed an equilibrium, we argue as in the beginning of Section~\ref{se:MFEproof}.
\end{proof}

The above proof does not generalize to piecewise constant reward functions with multiple jumps: while the feasibility constraint still yields one equation, there are now multiple unknowns (the locations of the atoms). To determine mean field equilibria for general reward functions, it is necessary to analyze the effect of jumps in some detail. 
Let 
$$%
J(g):=\{y\in(0,1): g(y-)<g(y+)\}
$$%
be the set of interior jump points of $g$. The next lemma says that any jump of~$g$---or equivalently of $R$---induces a flat segment in any equilibrium distribution. (The reasoning  in Remark~\ref{rk:LC-necessary} shows that this assertion fails at $y=0$, whence the definition of~$J(g)$ considers only interior jumps.)

\begin{lemma}\label{lemma:g-jump}
Let $F$ be a mean field equilibrium. For each $y\in J(g)$, the interval $\{x\ge 0: F(x)=y\}$ has positive length.
\end{lemma}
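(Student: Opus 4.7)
The plan is to proceed by contradiction, assuming $\{x\ge 0: F(x)=y\}$ has zero Lebesgue measure. This is equivalent to $F^{-1}(y)=F^{-1}(y+)=:x^*$, meaning $F$ crosses the level $y$ only at the single point $x^*$.

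First, the possibility that $F$ has an atom at $x^*$ strictly straddling $y$, i.e.\ $F(x^*-)<y<F(x^*)$, is immediately ruled out: Lemma~\ref{lemma:atom} would force $g$ to be constant on $(F(x^*-),F(x^*)]$, which contains $y$ in its interior, contradicting $g(y-)<g(y+)$. Writing $c_1:=g(y-)$ and $c_2:=g(y+)$ so that $c_2-c_1>0$, the remaining scenarios split into (a) $F(x^*-)=y$ (so any atom at $x^*$ starts from rank $y$ and has reward $\ge c_2$) and (b) $F(x^*-)<y=F(x^*)$ (so the atom at $x^*$ has reward $\xi^F(x^*)$ equal to an average of $g$ over $[F(x^*-),y]$, hence $\le c_1$).

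Assume first $x^*>0$. I identify a ``low-reward'' set $A$ and mass $p:=\nu(A)>0$ as follows: in scenario (b) take $A:=\{x^*\}$; in scenario (a) take $A:=[x_1,x^*)$ for some $x_1<x^*$ close to $x^*$, noting $p=y-F(x_1)>0$ (otherwise $F\equiv y$ on $[x_1,x^*)$, contradicting the zero-measure assumption). Picking $x_2>x^*$ close to $x^*$, define
\[
\nu':=\nu-\nu|_A+p'\delta_{x_2}+(p-p')\delta_0,\qquad p':=\frac{\int_A h\,d\nu}{h(x_2)}.
\]
Then $\nu'$ is a probability measure with $\int h\,d\nu'=\int h\,d\nu$, so $\nu'\in\cF$ by Lemma~\ref{le:attainableDistrib}. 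The removed mass satisfies $\xi^F\le c_1$ on $A$, while $\xi^F(x_2)\ge c_2$ (since $F(x_2)>y$) and $\xi^F(0)\le c_1$ (because $F(0)\le y$ and, by Lemma~\ref{lemma:atom}, any atom at $0$ averages $g$ over $[0,F(0)]\subseteq[0,y]$). Hence the gain is at least
\[
p'(c_2-\xi^F(0))-p(c_1-\xi^F(0)).
\]
Continuity of $h$ yields $p'/p\to 1$ as $x_1,x_2\to x^*$, so this lower bound tends to $p(c_2-c_1)>0$, contradicting the equilibrium property of $F$.

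Finally, the edge case $x^*=0$ requires a separate balancing because $h(0)=0$ leaves no room for compensation at $0$. Here one first checks $F(0)=y$ (the alternative $F(0)>y$ is the already-excluded straddling atom), so there is an atom of mass $y$ at $0$ of reward $\le c_1$. I would instead move mass $p$ from this atom to some small $x_2>0$ and compensate by moving mass $q:=p\,h(x_2)/h(x_3)$ from the support at some $x_3>x_2$ back to $0$; feasibility is preserved, and since $q/p\to 0$ as $x_2\to 0$, the reward change again tends to $p(c_2-c_1)>0$. The main technical obstacle I anticipate is this edge case, in particular locating a suitable $x_3$ with enough mass and controlling $\xi^F(x_3)-\xi^F(0)\le g(1)$, but the core mechanism is uniform across cases: the jump of $g$ at $y$ makes any low-reward mass lying at or just below rank $y$ profitably shiftable just past $x^*$ in space.
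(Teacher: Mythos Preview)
Your contradiction strategy and case split match the paper's, and your argument for $x^*>0$ is correct. The technical constructions differ: in case~(a) you compensate at the origin while the paper compensates at $x^*-2\varepsilon$; both work, and yours is arguably cleaner since $\xi^F(0)\le c_1$ is immediate. In case~(b) the paper uses a different perturbation that is worth knowing. Rather than removing only the atom and sending part of it to~$0$, the paper removes $\zeta:=\nu|_{[x^*,x^*+\varepsilon)}$ (the atom \emph{plus} a right neighborhood, which has positive mass precisely because $\{F=y\}$ has zero length) and concentrates all of it at the unique $x_2\in(x^*,x^*+\varepsilon)$ with $|\zeta|\,h(x_2)=\int h\,d\zeta$. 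Feasibility is then automatic with no external compensation; the atom's reward is upgraded from $c_1$ to at least $c_2$, and the loss on the right piece is controlled by taking $\varepsilon$ small so that $\xi^F(x^*+\varepsilon)<c_2+\gamma$. Crucially, this barycenter construction works verbatim when $x^*=0$, so the paper never needs your edge case.

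Your anticipated obstacle at $x^*=0$ is genuine: there is no reason for $\nu$ to have a point mass at any $x_3>0$, so ``moving mass $q$ from $x_3$'' is not well-posed as written. The easy patch is to replace $q\,\delta_{x_3}$ by $\lambda\,\nu|_{(0,\infty)}$ with $\lambda=p\,h(x_2)\big/\!\int_{(0,\infty)}h\,d\nu$; then $\lambda\to0$ as $x_2\to0$ and the same limiting gain $p(c_2-c_1)$ appears. But adopting the paper's case~(b) perturbation is the cleaner fix.

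Two minor points: with $A=[x_1,x^*)$ one has $p=\nu(A)=y-F(x_1-)$, not $y-F(x_1)$, though both are positive so nothing breaks. And your limit statement ``tends to $p(c_2-c_1)$'' is slightly loose since $p$ depends on $x_1$; the precise version is that the bracket $(p'/p)(c_2-\xi^F(0))-(c_1-\xi^F(0))$ can be made positive, after which $p>0$ finishes.
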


\begin{proof}
Let $y_1\in J(g)$, then $x_1:=F^{-1}(y_1)\in [0,\infty)$ as $y_1\in (0,1)$. 
Suppose for contradiction that $\{x\ge 0: F(x)=y_1\}$ has zero length, then $F(x)>y_1$ for all $x>x_1$. Let $\nu$ be the measure associated with $F$. In the remainder of the proof we construct a feasible distribution $\nu'$ that is strictly better than~$\nu$. 
By Lemma~\ref{lemma:atom} we have either $F(x_1-)=y_1$ or $F(x_1-)<y_1=F(x_1)$.

(i) Case $F(x_1-)=y_1$. In this case, $x_1>0$ and $F$ is non-constant in any left neighborhood of $x_{1}$. Fix $\gamma\in (0,g(y_1+)-g(y_1-))$ and observe that
\[\lim_{\eps\rightarrow 0+} \xi^F(x_1-2\eps)=g(y_1-), \quad\quad \lim_{\eps\rightarrow 0+} \frac{h(x_1-\eps)-h(x_1-2\eps)}{h(x_1)-h(x_1-2\eps)}=\frac{1}{2}.\]
We can thus find $\eps>0$ such that 
\begin{equation}\label{lemma:jump:eq1}
\xi^F(x_1-2\eps)>g(y_1-)-\gamma/2
\end{equation}
and
\begin{equation*}
\frac{h(x_1-\eps)-h(x_1-2\eps)}{h(x_1)-h(x_1-2\eps)}>\frac{\gamma}{g(y_1+)-g(y_1-)+\gamma}.
\end{equation*}
The measure
$\zeta:=\nu |_{(x_1-\eps,x_1)}$ has mass $|\zeta|>0$. Consider the probability measure
\[\nu':=\nu-\zeta+|\zeta|\left(\lambda \delta_{x_2}+(1-\lambda)\delta_{x_1-2\eps}\right)\]
where $x_2\in (x_1,\infty)$ is chosen to satisfy 
\begin{equation}\label{lemma:jump:eq2}
\frac{h(x_1-\eps)-h(x_1-2\eps)}{h(x_2)-h(x_1-2\eps)}>\frac{\gamma/2}{g(y_1+)-g(y_1)+\gamma/2}
\end{equation}
and
\begin{equation}\label{lemma:jump:eq3}
\lambda:=\frac{\frac{1}{|\zeta|}\int hd\zeta -h(x_1-2\eps)}{h(x_2)-h(x_1-2\eps)}\in\left(\frac{h_1(x_1-\eps)-h(x_1-2\eps)}{h(x_2)-h(x_1-2\eps)}, 1\right).
\end{equation}
It is easy to check that $\int h d(\nu'-\nu)=0$, hence $\nu'\in\cF$ by Lemma~\ref{le:attainableDistrib}. To see that $\nu'$ is strictly better than $\nu$, we use \eqref{lemma:jump:eq1}--\eqref{lemma:jump:eq3} and $F(x_2)>y_1$:
\begin{align*}
\int \xi^F & d(\nu'-\nu)
=\lambda \int (\xi^F(x_2)-\xi^F)d\zeta+(1-\lambda)\int (\xi^F(x_1-2\eps)-\xi^F)d\zeta\\
&\ge \lambda (g(y_1+)-g(y_1-)) |\zeta| +(1-\lambda)(\xi^F(x_1-2\eps)-g(y_1-))|\zeta|\\
&> |\zeta|\left( \lambda [g(y_1+)-g(y_1-)] -(1-\lambda)\frac{\gamma}{2}\right)
= |\zeta|\left( \lambda [g(y_1+)-g(y_1-)+\gamma/2] -\frac{\gamma}{2}\right)>0.
\end{align*}

(ii) Case $F(x_1-)<y_1=F(x_1)$. In this case, $\{x\ge 0: F(x)=y_1\}$ having zero length implies that $F$ is non-constant in any right neighborhood of $x_{1}$. Moreover, Lemma~\ref{lemma:atom} implies that $g(y_1-)=g(y_1)$.
 Fix $\gamma\in (0, (y_1-F(x_1-))(g(y_1+)-g(y_1)))$ and
$\eps>0$ such that $\xi^F(x_1+\eps)<g(y_1+)+\gamma$.
We define $\zeta:=\nu |_{[x_1,x_1+\eps)}$ and 
\[\nu':=\nu-\zeta+|\zeta|\delta_{x_2},\]
where $x_2>x_1$ is to be determined. Since $h$ is strictly increasing and $\nu(x_1, x_1+\eps)>0$, we see that $|\zeta| h(x_1)<\int h d\zeta <|\zeta| h(x_1+\eps)$ and consequently there exists $x_2\in (x_1, x_1+\eps)$ such that 
\[\int h d(\nu'-\nu)= |\zeta| h(x_2)-\int h d\zeta=0.\]
For this choice of $x_2$, we have $\nu'\in\cF$ by Lemma~\ref{le:attainableDistrib}. Moreover, $\nu'$ is strictly better than $\nu$:
\begin{align*}
\int \xi^F d(\nu'-\nu)
&=(y_1-F(x_1-))[\xi^F(x_2)-\xi^F(x_1)]+\int (\xi^F(x_2)- \xi^F)d\nu|_{(x_1,x_1+\eps)}\\
&\ge (y_1-F(x_1-))(g(y_1+)-g(y_1))+(g(y_1+)-\xi^F(x_1+\eps))\nu(x_1,x_1+\eps)\\
&\ge  (y_1-F(x_1-))(g(y_1+)-g(y_1))-\gamma>0
\end{align*}
by the choice of $\gamma$.
\end{proof}

\subsection{Characterizing the Equilibrium}\label{se:uniquenessByMinimization}

From now on, we shall work under the assumption that $\mu<\bar \mu_\infty$ and $R$ is right-continuous with $R(0)>R(1-)=R(1)$.

The general idea of the uniqueness argument is to analyze a minimization problem: in equilibrium, the opposing players act such as to minimize the value function of a given representative player, subject to the constraint that the opponents act symmetrically. This turns out to be substantially more involved than in the $n$-player case, due to the possible presence of  atoms in the equilibrium distribution and the non-invertibility of the function~$g$. 

Let $u^F$ be the value function of a representative player if the other players use $F\in\cF$.
Dynamic programming and optimal stopping theory yield
\[
u^F(x_0)=\sup_{\tau<\infty} E[\xi^F(X_\tau)]=(\xi^F \circ h^{-1})^{conc}(1)\le (g\circ F \circ h^{-1})^{conc}(1)
\]
where $h$ is the scale function~\eqref{eq:defh} with normalization $h(x_{0})=1$ and \emph{conc} denotes the concave envelope on $\mathbb{R}_+$. 
The last inequality is due to possible breaking of ties, cf.~\eqref{eq:defxi}. Lemma~\ref{lemma:atom} shows that the inequality must be an equality if~$F$ is a mean field equilibrium, even if ties do occur.
On the other hand, if $F^\ast$ is an equilibrium, we must have 
\begin{equation}\label{minimization1b}
  \bar{R}=u^{F^\ast}(x_{0}) = \min_{F\in\cF} u^{F}(x_{0}).
\end{equation}
Indeed, given arbitrary $F\in\cF$, a representative player can achieve $\bar{R}$ by also choosing~$F$, and in equilibrium, this is the best possible performance, by symmetry. Combining the two arguments, any mean field equilibrium $F^\ast$ must satisfy
\[(g\circ F^\ast\circ h^{-1})^{conc}(1)=u^{F^\ast}(x_0)=\min_{F\in\cF}u^F(x_0)\le \min_{F\in\cF} (g\circ F \circ h^{-1})^{conc}(1).\]
That is, 
\begin{equation*}\label{eq:Phi}
F^\ast\in \argmin_{F\in\cF} \Phi(F)^{conc}(1), \quad \mbox{where}\quad \Phi(F):=g\circ F\circ h^{-1}.
\end{equation*}
We also write $\Phi^{-1}(\phi):=g^{-1}\circ \phi\circ h$. We recall that $g^{-1}$ denotes the right-continuous inverse of $g$; in particular, 
 $g^{-1}(g(y))\ge y$ 
and $g(g^{-1}(z))\le z$. Similarly, $\Phi^{-1}(\Phi(F))\ge F$ and $\Phi(\Phi^{-1}(\phi))\le \phi$. 
Finally, we denote 
\begin{align*}
 \bar w_F&:=\inf \{w\in [0, h(\infty)]: \Phi(F)(w)=g(1)\}\le h(\infty),\\
 \bar y&:=\inf\{y\in [0,1]: g(y)=g(1)\},\\
 \cF'&:=\{F\in \cF: \bar w_F>1, \text{ and } \bar w_F<h(\infty) \mbox{ in case $\bar y<1$}\}.
\end{align*}

\begin{lemma}\label{lemma:MFG4}
If $F$ is a mean field equilibrium, then $F\in\cF'$ and $\bar w_F=\inf\{w\in [0, h(\infty)]: F\circ h^{-1}(w)=1\}$. 
In particular, $F\circ h^{-1}(\bar w_F)=1$.
\end{lemma}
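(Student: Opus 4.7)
The plan is to first derive the key identity $\bar w_F = h(F^{-1}(1))$, from which three of the four conclusions follow readily; the remaining conclusion $\bar w_F > 1$ is then handled separately using the equilibrium value.

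\emph{Identifying $\bar w_F$.} Since $g$ is increasing and left-continuous with maximum $g(1)$, the set $\{y \in [0,1] : g(y) = g(1)\}$ equals $[\bar y, 1]$ when $g(\bar y) = g(1)$ and equals $(\bar y, 1]$ otherwise (the second case corresponding to $\bar y$ being a right-jump of $g$). Using $\Phi(F) = g \circ F \circ h^{-1}$, this translates to $\bar w_F = h(F^{-1}(\bar y))$ in the first case and $\bar w_F = h(F^{-1}(\bar y+))$ in the second, where $F^{-1}(y+) := \inf\{x : F(x) > y\}$.

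\emph{Applying the equilibrium structure.} When $\bar y < 1$, Lemma~\ref{lemma:g-flat} applied with $y_1 = \bar y$, $y_2 = 1$ to the flat segment of $g$ at level $c = g(1)$ yields $F^{-1}(\bar y+) = F^{-1}(1)$ together with $F(F^{-1}(1)-) = \bar y$ and $F(F^{-1}(1)) = 1$. When additionally $g(\bar y) = g(1)$, Remark~\ref{rk:g-flat} gives the stronger $F^{-1}(\bar y) = F^{-1}(1)$. In either subcase, substitution yields $\bar w_F = h(F^{-1}(1))$; the case $\bar y = 1$ gives this identity trivially. From this identity: first, $\bar w_F < h(\infty)$ when $\bar y < 1$, since $F(\infty) = 1 > \bar y$ forces $F^{-1}(1) < \infty$ and $h$ is strictly increasing; second, $\bar w_F = \inf\{w : F \circ h^{-1}(w) = 1\}$ because $\{x : F(x) = 1\} = [F^{-1}(1), \infty)$ by right-continuity of $F$; and third, $F \circ h^{-1}(\bar w_F) = F(F^{-1}(1)) = 1$, again by right-continuity (using the natural extension $F \circ h^{-1}(h(\infty)) := F(\infty) = 1$ to cover $F^{-1}(1) = \infty$, which can occur only when $\bar y = 1$).

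\emph{Proving $\bar w_F > 1$.} This uses $u^F(x_0) = \bar R$ from \eqref{minimization1b} together with $\bar R < R(0) = g(1)$, a consequence of $R(0) > R(1)$. For any $w > \bar w_F$, monotonicity of $\Phi(F)$ forces $\Phi(F)(w) = g(1)$; hence its concave envelope $u^F \circ h^{-1}$, which majorizes $\Phi(F)$ and is itself bounded above by $g(1)$, equals $g(1)$ on $(\bar w_F, h(\infty)]$. Continuity of $u^F$ in $x$ (as a concave function of the $h$-scale on the interior) extends this to $u^F(h^{-1}(\bar w_F)) = g(1) > \bar R = u^F(x_0)$; since $u^F$ is non-decreasing in $x$ as the value function of an optimal stopping problem with increasing reward, this forces $h^{-1}(\bar w_F) > x_0$, i.e., $\bar w_F > 1$. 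The main subtlety I anticipate is the careful bookkeeping between $F^{-1}(\bar y)$ and $F^{-1}(\bar y+)$ needed to treat the cases $g(\bar y) = g(1)$ and $g(\bar y) < g(1)$ uniformly; this is precisely what Lemma~\ref{lemma:g-flat} and Remark~\ref{rk:g-flat} jointly provide.
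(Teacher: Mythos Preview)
Your proof is correct and is organized somewhat differently from the paper's. Two points of comparison:

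\emph{For the conclusions other than $\bar w_F>1$.} You first establish the single identity $\bar w_F = h(F^{-1}(1))$ by combining the elementary rewriting of $\bar w_F$ with Lemma~\ref{lemma:g-flat} and Remark~\ref{rk:g-flat}, and then read off all three consequences at once. The paper instead treats $\bar w_F<h(\infty)$ by a direct cdf argument and proves $F\circ h^{-1}(\bar w_F)=1$ via three separate subcases (distinguishing $\bar y=1$, $\bar y<1$ with $g(\bar y)<g(1)$, and $\bar y<1$ with $g(\bar y)=g(1)$). Your unified identity is cleaner; the paper's case split makes each step more self-contained but is longer.

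\emph{For $\bar w_F>1$.} Here your argument is a genuine simplification. The paper assumes $\bar w_F\le 1$, deduces $\Phi(F)^{conc}(1)\ge g(1)$, and then \emph{constructs} an explicit competitor $G\in\cF$ with $\Phi(G)^{conc}(1)<g(1)$ to contradict the minimization property. You instead invoke the already-established fact $u^F(x_0)=\bar R$ from~\eqref{minimization1b} together with $\bar R<R(0)=g(1)$, which yields the contradiction immediately without any construction. One small remark: your phrasing appeals to continuity of $u^F\circ h^{-1}$ at $\bar w_F$, which requires $\bar w_F$ to be interior. This is fine once you note $\bar w_F>0$ (equivalently $F\neq\delta_0$, which cannot be an equilibrium), or alternatively you can argue directly at $w=1$ (always interior since $h(\infty)>1$) rather than at $\bar w_F$, which sidesteps the issue entirely.
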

\begin{proof}
We first show $F\in\cF'$. Suppose $\bar w_F\le 1$. Then 
\[\Phi(F)^{conc}(1)\ge \Phi(F)^{conc}(\bar w_F)=\Phi(F)^{conc}(\bar w_F+)=g(1).\]
Consider the distribution $G(x)=\lambda 1_{[0,x_0+\eps)}(x)+ 1_{[x_0+\eps,\infty)}(x)$ where $\eps >0$ and $\lambda\in (0,\bar y)$ are chosen so that $\int h dG=(1-\lambda) h(x_0+\eps)=1$. We have $\Phi(G)=g(\lambda)1_{[0,h(x_0+\eps))}+g(1) 1_{[h(x_0+\eps),\infty)}$. The concave hull of this function is readily determined and in view of $g(\lambda)<g(1)$, we arrive at $\Phi(G)^{conc}(1)<g(1)\le \Phi(F)^{conc}(1)$, contradicting the optimality of $F$.

Suppose $\bar y<1$ and $\bar w_F=h(\infty)$. Then for all $x<\infty$, we have $h(x)<h(\infty)=\bar w_F$, which implies $g(1)>\Phi(F)(h(x))=g(F(x))$. But then $F(x)\le \bar y<1$ for all $x\in\R$, contradicting that $F$ is the cdf of a probability measure on $\R$.

We next show $F\circ h^{-1}(\bar w_F)=1$.  This is trivial if $\bar w_F=h(\infty)$, so we may assume that $\bar w_F<h(\infty)$. For any $w>\bar w_F$, we have $g(F\circ h^{-1}(w))=\Phi(F)(w)=g(1)$, which implies: 

(i) $F\circ h^{-1}(w)=1$ if $\bar y=1$. Then by right-continuity, $F\circ h^{-1}(\bar w_F)=1$.

(ii) $F\circ h^{-1}(w)>\bar y$ if $\bar y<1$ and $g(\bar y)<g(1)$. In this case, $h^{-1}(w) \ge F^{-1}(\bar y+)$ for all $w>\bar w_F$, which further yields $h^{-1}(\bar w_F)\ge F^{-1}(\bar y+)$. By Lemma~\ref{lemma:g-flat}, $F$ jumps from $\bar y$ to $1$ at $F^{-1}(\bar y+)$. It follows that $F\circ h^{-1}(\bar w_F)\ge F(F^{-1}(\bar y+))=1$.

(iii) $F\circ h^{-1}(w)\ge \bar y$ if $\bar y<1$ and $g(\bar y)=g(1)$. In this case, we use Remark~\ref{rk:g-flat} to obtain $h^{-1}(w)\ge F^{-1}(\bar y)=F^{-1}(\bar y+)$ and thus $h^{-1}(\bar w_F)\ge F^{-1}(\bar y+)$. We obtain the same conclusion as in~(ii).

Finally, for $w<\bar w_F$, $\Phi(F)(w)<g(1)$ implies $F\circ h^{-1}(w)<1$.
\end{proof}

\begin{lemma}\label{lemma:MFG3}
Let $F\in\cF'$.
Suppose there exists an increasing concave function $\phi\ge \Phi(F)$ on $[0,h(\infty)]$ satisfying $\phi(1)\le \Phi(F)^{conc}(1)$ and 
\begin{align*}
\int_0^{h(\infty)} (1-g^{-1}\circ \phi(w))dw &<1. %
\end{align*}
Then there exists $F'\in\cF$ such that $\Phi(F')^{conc}(1)<\Phi(F)^{conc}(1)$ and consequently, $F$ cannot be a mean field equilibrium.
\end{lemma}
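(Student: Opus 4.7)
The natural candidate is $F' := \Phi^{-1}(\phi) = g^{-1} \circ \phi \circ h$. First, I would verify that $F'$ is a valid cdf on $[0,\infty)$: monotonicity and right-continuity follow from composing monotone, right-continuous maps; $F'(\infty) = 1$ uses $\phi \ge \Phi(F)$ and $\Phi(F)(w) = g(1)$ for $w \ge \bar w_F$, together with the property $\bar w_F \le h(\infty)$ encoded in $F \in \cF'$ via Lemma~\ref{lemma:MFG4}. The standard identity $\int h\, dF' = \int_0^{h(\infty)}(1 - F' \circ h^{-1}(w))\, dw$ then matches the hypothesis to give $\int h\, dF' < 1$.

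Next, feasibility. Lemma~\ref{le:attainableDistrib} yields $F' \in \cF$ immediately when $\mu \le 0$. When $\mu > 0$ the constraint $\int h\, dF' = 1$ must hold with equality, so I would mix $F'$ with a point mass at some $x^\ast$ satisfying $h(x^\ast) \ge \bar w_F$ (possible because $\bar w_F < h(\infty)$ under $F \in \cF'$ when $\bar y < 1$; the edge case $\bar y = 1$, $\bar w_F = h(\infty)$ is addressed by a limiting argument $x^\ast \to \infty$). With the mixing weight chosen so that the $h$-integral equals~$1$, the pointwise bound $\Phi(F') \le \phi$ persists: on $[0, h(x^\ast))$ the mixed cdf is dominated by $g^{-1}(\phi)$; on $[h(x^\ast), h(\infty)]$ the inequality is trivial since $\phi \ge g(1) \ge \Phi(F')$ there.

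With $\Phi(F') \le \phi$ in hand, the bound $\Phi(F')^{conc}(1) \le \phi(1) \le \Phi(F)^{conc}(1)$ is immediate from monotonicity of the concave envelope and concavity of $\phi$. The main obstacle is upgrading this to a strict inequality. If $\phi(1) < \Phi(F)^{conc}(1)$ there is nothing more to do; otherwise $\phi(1) = \Phi(F)^{conc}(1)$, and I would exploit the strict slack in the integral condition to perturb $F'$ further. Concretely, the unused mass budget allows the insertion of a small atom so that $\Phi(F')$ drops strictly below~$\phi$ on an interval whose convex hull contains~$1$. The delicate point is that a pointwise drop must translate into a drop of the concave envelope at the single point $w = 1$; this is achieved by noting that any supporting affine line of $\Phi(F')^{conc}$ at $1$ dominates $\Phi(F')$ pointwise and is itself dominated by $\phi$, so the perturbation forces its value at $1$ strictly below $\phi(1)$.

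Finally, the stated consequence that $F$ cannot be an equilibrium follows at once from the framework of Section~\ref{se:uniquenessByMinimization}: an equilibrium $F$ would satisfy $u^F(x_0) = \Phi(F)^{conc}(1) = \bar R$, using Lemma~\ref{lemma:atom} for the first equality and the symmetry identity for the second, while $u^{\tilde F}(x_0) \ge \bar R$ for every $\tilde F \in \cF$ since the representative player can always mimic~$\tilde F$. Applied to $\tilde F = F'$, this yields $\bar R \le u^{F'}(x_0) \le \Phi(F')^{conc}(1) < \Phi(F)^{conc}(1) = \bar R$, the desired contradiction.
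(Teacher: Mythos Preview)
Your sketch shares the paper's starting point---taking $F' = \Phi^{-1}(\phi)$ as the basic candidate---but the crucial step, obtaining the \emph{strict} inequality $\Phi(F')^{conc}(1) < \Phi(F)^{conc}(1)$, is not established. When $g$ is continuous one has $g\circ g^{-1}=\mathrm{id}$, hence $\Phi(F')=\phi$ exactly and $\Phi(F')^{conc}(1)=\phi(1)$; in the typical case $\phi(1)=\Phi(F)^{conc}(1)$ you therefore have only equality, and a genuine perturbation is required. Your proposed fix (``insert a small atom so that $\Phi(F')$ drops strictly below $\phi$ on an interval containing~$1$, then use a supporting-line argument'') does not work as written: adding an atom to $F'$ \emph{increases} $\Phi(F')$; conversely, lowering $F'$ on an interval need not lower the concave envelope at the single point~$1$, since that value is determined by the touching points of the envelope, which can lie outside the perturbed interval. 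The claim that the supporting line at~$1$ is ``dominated by~$\phi$'' is also unjustified (an affine support of a concave majorant need not lie below another concave function globally).

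The paper's argument supplies the missing idea: because $F\in\cF'$ forces $\phi(1)\le\Phi(F)^{conc}(1)<g(1)=\phi(\bar w_\phi)$, the increasing concave~$\phi$ is \emph{strictly} increasing on $[0,\bar w_\phi)\supset[0,1]$. One then perturbs $\phi$ itself to a concave $\phi_\eps$ with $\phi_\eps(1)<\phi(1)$ (e.g.\ $\phi_\eps(w)=\phi(\eps w)$, or a secant line capped at $g(1)$), sets $F_\lambda=\Phi^{-1}(\lambda\phi+(1-\lambda)\phi_\eps)$, and uses the strict integral slack together with a dominated-convergence / intermediate-value argument in~$\lambda$ to recover feasibility. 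This is done in four cases according to the sign of~$\mu$ and whether $\bar y=1$. In particular, your treatment of the case $\mu>0$, $\bar y=1$ (``limiting argument $x^\ast\to\infty$'') is not adequate: mixing with a point mass at $x^\ast$ only preserves $\Phi(\cdot)\le\phi$ on $[h(x^\ast),h(\infty)]$ if $\phi\ge g(1)$ there, which can fail when $\bar w_\phi=h(\infty)$; the paper instead mixes with the line~$\ell$ joining $(0,g(0))$ and $(h(\infty),g(1))$ and selects~$\lambda$ by continuity of $\lambda\mapsto\int h\,dF_\lambda$.
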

\begin{proof}
Let $\phi$ be as stated. Note that $\bar w_F>1$ implies $\Phi(F)(1+)<g(1)$ which further yields 
$\Phi(F)^{conc}(1)<g(1)=\Phi(F)(\bar w_F+)\le \phi(\bar w_F+)=\phi(\bar w_F)$. Let $\bar w_\phi:=\inf\{w\ge 0: \phi(w)=g(1)\}$. Since $\phi(1)<g(1)$ and $\phi(\bar w_F+)=g(1)$, we know $1<\bar w_\phi \le \bar w_F$.
Consider four cases:

(i) $\mu\le 0$ and $\bar y=1$. In this case, $h(\infty)=\infty$ and $g^{-1}$ is continuous at $g(1)$. Choose $\eps\in (0,1)$ such that $\phi(\eps)<\phi(1)$. Such $\eps$ exists: as $\phi$ is increasing and concave, it must be strictly increasing before reaching $g(1)$.
Let $\phi_\eps(w):=\phi(\eps w)$. Then $\phi_\eps$ is concave on $\R_+$ and satisfies $\phi_\eps(1)<\phi(1)$.
Next, define $F_\lambda:=\Phi^{-1}(\lambda \phi+(1-\lambda)\phi_\eps)$. One can check that $F_\lambda$ is right-continuous and satisfies $F_\lambda(\infty)=1$. We also have that for $\lambda\in[0,1)$, $\Phi(F_\lambda)^{conc}(1)\le \left(\lambda \phi +(1-\lambda) \phi_\eps \right)^{conc}(1)=\lambda \phi(1) +(1-\lambda) \phi_\eps(1)<\phi(1)\le \Phi(F)^{conc}(1)$, showing that $F_\lambda$ is strictly better than $F$. To reach the desired contradiction, it remains to show the feasibility of $F_\lambda$ for $\lambda$ sufficiently close to one. 
We have
\begin{align*}
\int h dF_\lambda =\int_0^\infty (1-F_\lambda \circ h^{-1})(w) dw
=\int_0^\infty (1-g^{-1}\circ (\lambda \phi +(1-\lambda)\phi_\eps))(w)dw.
\end{align*}
As $g^{-1}$ is monotone, it has at most countably many points of discontinuity, and $\bar y=1$ implies that $g(1)$ is not one of them. For any $z<g(1)$, the set $\{w\ge 0: \phi(w)=z\}$ has zero Lebesgue measure because $\phi$ is strictly increasing before reaching $g(1)$. It follows that as $\lambda\rightarrow 1$, the integrand converges a.e.\ to $1-g^{-1}\circ \phi$. Using
$0\le 1-g^{-1}\circ (\lambda \phi +(1-\lambda)\phi_\eps) \le 1-g^{-1}\circ \phi_\eps  \le  1-g^{-1}\circ \Phi(F)(\eps\cdot id)\le 1-F \circ h^{-1} (\eps\cdot id)$ and $\int_0^\infty (1-F \circ h^{-1} (\eps w)) dw =\frac{1}{\eps}\int h dF<\infty$, dominated convergence yields that
\begin{align*}
\lim_{\lambda\rightarrow 1}\int h dF_\lambda&=\int_0^\infty (1-g^{-1}\circ \phi)(w)dw <1.
\end{align*}
By Lemma~\ref{le:attainableDistrib}, this shows that $F_{\lambda}$ is feasible for $\lambda$ sufficiently close to one. 

(ii) $\mu\le 0$ and $\bar y<1$. In this case, $1<\bar w_\phi\le \bar w_F<h(\infty)=\infty$. 
Choose $\eps>0$ such that $\bar w_\phi+\eps < h(\infty)$ and $\int_0^{\bar w_\phi} (1-g^{-1}\circ \phi)(w)dw + (1-\bar y)\eps<1$.
Let $\phi'_\eps$ denote the line connecting $(0,\phi(0))$ and $(\bar w_\phi+\eps, g(1))$ and capped at level $g(1)$; i.e.,
\begin{equation*}\label{eq:phi_eps}
\phi'_\eps (w)=\left(\phi(0)+\frac{g(1)-\phi(0)}{\bar w_\phi+\eps} w\right)\wedge g(1).
\end{equation*}
Then $\phi'_\eps$ is concave on $\R_+$ and satisfies $\phi'_\eps(1)<\phi(1)$. As in the previous case, we define 
$F'_\lambda:=\Phi^{-1}(\lambda \phi+(1-\lambda)\phi'_\eps)$. Then $F'_\lambda$ is a cdf supported on $\R_+$ which satisfies $\Phi(F'_\lambda)^{conc}(1)<\Phi(F)^{conc}(1)$ for all $\lambda\in[0,1)$. To check the feasibility of $F'_\lambda$ for $\lambda$ close to one, we write
\begin{align*}
\int h dF'_\lambda
& = \int_0^{\bar w_\phi} (1-g^{-1}(\lambda \phi(w) +(1-\lambda)\phi_\eps (w)))dw \\
&\quad + \int_{\bar w_\phi}^{\bar w_\phi+\eps}  (1-g^{-1} (\lambda g(1) +(1-\lambda)\phi_\eps (w)))dw.
\end{align*}
Using that $\phi$ is strictly increasing on $[0, \bar w_\phi]$, we obtain by bounded convergence  that
$%
\lim_{\lambda\rightarrow 1}\int h dF'_\lambda=\int_0^{\bar w_\phi} (1-g^{-1}\circ \phi)(w)dw + (1-\bar y)\eps <1.
$%

(iii) $0<\mu<\bar \mu_\infty$ and $\bar y=1$. In this case, $\bar w_\phi\le h(\infty)<\infty$ and $g^{-1}$ is continuous at $g(1)$. 
Let $\ell$ be the line segment connecting $(0,g(0))$ and $(h(\infty), g(1))$. 
We have
\begin{align*}
&\int h d\Phi^{-1}(\ell)= \int_0^{h(\infty)} \left(1-\Phi^{-1}(\ell)\circ h^{-1}(w)\right) dw=\int_0^{h(\infty)} \left(1-g^{-1}\circ \ell(w)\right)dw\\
&=\frac{h(\infty)}{g(1)-g(0)}\int_{g(0)}^{g(1)} \left(1-g^{-1}(y)\right) dy
=\frac{h(\infty)}{g(1)-g(0)}\left(\int_0^1 g(y)dy - g(0)\right)=\frac{h(\infty)}{h(\bar x)}>1.
\end{align*}
Since $\phi(0)\ge g(0)=\ell(0)$ and $\phi(h(\infty))=g(1)=\ell(h(\infty))$, by concavity, either $\phi> \ell$ on $(0, h(\infty))$ or $\phi= \ell$. The latter case is impossible as $F \le \Phi^{-1}(\Phi(F))\le \Phi^{-1}(\phi)=\Phi^{-1}(\ell)$ would imply $F \notin \cF$.
Set $F''_\lambda:=\Phi^{-1}(\lambda \phi+(1-\lambda)\ell)$. We again have $\Phi(F''_\lambda)^{conc}(1)\le \lambda \phi(1)+(1-\lambda)\ell(1)<\phi(1)\le \Phi(F)^{conc}(1)$ if $\lambda \in [0,1)$. Let 
\begin{align*}
I(\lambda):=\int h dF''_\lambda 
&=\int_0^{h(\infty)} (1-g^{-1} (\lambda \phi(w)+(1-\lambda)\ell(w)) )dw.
\end{align*}
Using the continuity of $g^{-1}$ at $g(1)$, the strict monotonicity of $\lambda \phi+(1-\lambda)\ell$ before reaching $g(1)$, and bounded convergence theorem, 
we deduce that $I(\cdot)$ is continuous on $(0,1)$ satisfying $I(1-)=\int_0^{h(\infty)} (1-g^{-1}\circ \phi(w))dw<1$ and 
\begin{align*}
I(0+)&=\int_0^{h(\infty)} (1-g^{-1} \circ \ell)(w)dw=\int_0^{h(\infty)} (1-\Phi^{-1}(\ell) \circ h^{-1})(w)dw
=\int h d\Phi^{-1}(\ell)>1.
\end{align*}
We may thus choose $\lambda_0\in(0,1)$ such that $I(\lambda_0)=1$. Then $F''_{\lambda_0}\in \cF$ and the contradiction is complete.

(iv) $0<\mu<\bar \mu_\infty$ and $\bar y<1$. In this case, $\bar w_\phi<h(\infty)<\infty$. Let $\phi'_\eps$ and $F'_\lambda$ be constructed as in Case~(ii) with $\eps$ satisfying $\bar w_\phi+2\eps<h(\infty)$. Define
\[G(x):=\begin{cases}
\gamma F'_\lambda(x), & x< h^{-1}(\bar w_\phi+2\eps),\\
1, & x\ge h^{-1}(\bar w_\phi+2\eps),
\end{cases}\]
for some $\gamma\in(0,1)$ to be determined.
We have $F'_\lambda(h^{-1}(\bar w_\phi+2\eps)-)\ge F'_\lambda(h^{-1}(\bar w_\phi+\eps))=g^{-1}(\lambda \phi(\bar w_\phi+\eps)+(1-\lambda)\phi'_\eps(\bar w_\phi+\eps))=1$ and
\[\int hdG=\gamma \int hdF'_\lambda +(1-\gamma) (\bar w_\phi+2\eps).\] 
In view of  $\int hdF'_\lambda<1$ and $\bar w_\phi+2\eps>1$, we can find $\gamma\in (0,1)$ such that $\int hdG=1$, and then $G$ is feasible. We arrive that the desired contradiction after noting that $\Phi(G)^{conc}(1)\le \Phi(F'_\lambda)^{conc}(1)<\Phi(F)^{conc}(1)$.
\end{proof}

\begin{lemma}\label{lemma:MFG5}
Let $F$ be a mean field equilibrium. Define
\[A=\bigcup_{y\notin J(g)} A_y, \quad \text{where}\quad A_y=\{w\in [0, h(\infty)]\cap\R: F\circ h^{-1}(w)=y\},\]
as well as $w_y=\inf A_y$. Then $w_y<\bar w_F$ for all $y\in J(g)$. Moreover, there exists a strictly increasing affine function $\ell_1\ge \Phi(F)$ satisfying $\ell_1(1)=\Phi(F)^{conc}(1)$ and %
\begin{itemize}
\item[(i)] $\ell_1(\bar w_F)=g(1)$,
\item[(ii)] $\Phi(F)=\ell_1\wedge g(1)$ on $A$,
\item[(iii)] $\Phi(F)(w_y)=\ell_1(w_y)$ for all $y\in J(g)$.
\end{itemize}
\end{lemma}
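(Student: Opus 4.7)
The plan applies Lemma~\ref{lemma:MFG3} to three carefully chosen candidates---$\phi=\Phi(F)^{conc}$, the constant $g(1)$, and $\ell_1\wedge g(1)$---and then uses continuity of the concave envelope to extract pointwise information. Writing $f:=\Phi(F)^{conc}$ for brevity, I would first take $\phi=f$: it satisfies the hypotheses of Lemma~\ref{lemma:MFG3} trivially, so the equilibrium assumption forces $\int_0^{h(\infty)}(1-g^{-1}\circ f)\,dw\ge 1$. Combined with the pointwise inequality $F\circ h^{-1}\le g^{-1}\circ f$ (from $\Phi(F)\le f$ and $g^{-1}(g(y))\ge y$) and feasibility $\int h\,dF\le 1$, both inequalities collapse into equalities, yielding $\int h\,dF=1$ and the crucial a.e.\ identity $F\circ h^{-1}=g^{-1}\circ f$; together with Lemma~\ref{lemma:MFG4} and the observation $g^{-1}(z)=1\Leftrightarrow z=g(1)$, this forces $\bar w_F=\inf\{w:f(w)=g(1)\}=:\bar w^+$. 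Next, if $f(1)=g(1)$, the constant $\phi\equiv g(1)$ would satisfy the hypotheses of Lemma~\ref{lemma:MFG3} yet give $\int(1-g^{-1}(g(1)))\,dw=0<1$, a contradiction; hence $f(1)<g(1)$. I then define $\ell_1$ as the supporting line of $f$ at $w=1$ with slope $s_0:=f'_+(1)$, which is strictly positive (the chord from $(1,f(1))$ to $(\bar w^+,g(1))$ has positive slope), so $\ell_1$ is strictly increasing, and satisfies $\ell_1\ge f\ge\Phi(F)$ and $\ell_1(1)=f(1)=\Phi(F)^{conc}(1)$.

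To prove~(i), apply Lemma~\ref{lemma:MFG3} to $\phi:=\ell_1\wedge g(1)$: this is concave, dominates $f$ (since $\ell_1\ge f$ and $g(1)\ge f$), and $\phi(1)=f(1)$. The equilibrium condition gives $\int(1-g^{-1}\circ\phi)\,dw\ge 1$, while $\phi\ge f$ combined with the identity obtained above gives the reverse inequality; hence $g^{-1}\circ\phi=g^{-1}\circ f$ a.e. Let $w^{**}:=\inf\{w:\ell_1(w)\ge g(1)\}$; the inequality $\ell_1\ge f$ gives $w^{**}\le\bar w^+$, and if $w^{**}<\bar w^+$, then on the positive-length interval $[w^{**},\bar w^+)$ one would have $g^{-1}\circ\phi=1>g^{-1}\circ f$, contradicting the a.e.\ identity. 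Hence $w^{**}=\bar w^+=\bar w_F$ and $\ell_1(\bar w_F)=g(1)$.

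For the remaining claims, $w_y<\bar w_F$ for $y\in J(g)$ follows immediately from Lemma~\ref{lemma:g-jump}: the positive-length constancy interval $[x_y,x_y^+]$ of $F$ at level $y$ satisfies $x_y^+\le\bar x$ since $F(\bar x)=1>y$ by Lemma~\ref{lemma:MFG4}, so $w_y=h(x_y)<h(x_y^+)\le\bar w_F$. To prove~(iii), Lemma~\ref{lemma:atom} gives $\Phi(F)(w_y)=g(y)$. For $w<w_y$, $F\circ h^{-1}(w)<y$; the a.e.\ identity and the equivalence $g^{-1}(z)<y\Leftrightarrow z<g(y)$ (valid by left-continuity of $g$) yield $f(w)<g(y)$ a.e.\ on $[0,w_y)$, and continuity of $f$ then forces $f(w_y)\le g(y)$; combined with $f(w_y)\ge\Phi(F)(w_y)=g(y)$, we conclude $f(w_y)=g(y)$. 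The same argument applied to $\phi=\ell_1\wedge g(1)$ gives $(\ell_1\wedge g(1))(w_y)=g(y)$, and since $w_y<\bar w_F$ implies $\ell_1(w_y)<g(1)$, we obtain $\ell_1(w_y)=g(y)$ as required. An analogous pointwise argument at $w\in A_y$ for $y\notin J(g)$---where continuity of $g$ at $y$ makes $g^{-1}$ effectively injective around $g(y)$---gives $\Phi(F)(w)=g(y)=(\ell_1\wedge g(1))(w)$, proving~(ii). The main obstacle is converting these a.e.\ identities into the pointwise conclusions of (i)--(iii); this rests on the jump of $g^{-1}$ at $g(1)$ (for~(i)) and the strict one-sided inequality below~$w_y$ (for~(ii),~(iii)), both exploited through the continuity of the concave envelope.
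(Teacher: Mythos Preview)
Your approach is correct in its essentials and takes a genuinely different route from the paper. The paper argues each of (i)--(iii) by direct contradiction: assuming the conclusion fails, it constructs---by hand (for~(iii), via an auxiliary cdf $F_1$)---an inequality $\int_0^{h(\infty)}(1-g^{-1}\circ\psi)\,dw<1$, then invokes Lemma~\ref{lemma:MFG3}. You instead apply Lemma~\ref{lemma:MFG3} \emph{twice} (to $\phi=f$ and $\phi=\ell_1\wedge g(1)$) to squeeze out the a.e.\ identities $F\circ h^{-1}=g^{-1}\circ f=g^{-1}\circ(\ell_1\wedge g(1))$, and then read off~(i)--(iii) pointwise via continuity. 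This is elegant: the sandwich $1\le\int(1-g^{-1}\circ f)\le\int(1-F\circ h^{-1})=\int h\,dF\le 1$ also delivers $\int h\,dF=1$ for free, and the argument for~(i) (comparing the two concave majorants on a potential interval of discrepancy, using that $g^{-1}$ jumps at $g(1)$) is cleaner than the paper's case-by-case treatment. Two small points: you should cite Lemma~\ref{lemma:MFG4} for $F\in\cF'$ before invoking Lemma~\ref{lemma:MFG3}; and for (ii) at a point $w$ where $A_y=\{w\}$, your ``analogous'' argument should use the \emph{right}-neighborhood (via right-continuity of $g^{-1}\circ\psi$ and $F\circ h^{-1}$, plus the implication $z>g(y)=g(y+)\Rightarrow g^{-1}(z)>y$ for $y\notin J(g)$), since the left-neighborhood argument you sketch requires $w>0$.

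There is one edge case your ``from below'' argument does not cover: item~(iii) when $w_y=0$, i.e.\ $F(0)=y\in J(g)$. Here $[0,w_y)=\emptyset$, so the a.e.\ inequality $f<g(y)$ below $w_y$ is vacuous, and on the right the identity only yields $\psi(w)\in[g(y),g(y+))$ a.e.\ on $A_y$, which does not pin down $\psi(0)=g(y)$. To be fair, the paper's own construction in~(iii) also degenerates in this case (one gets $w_y'<0$, the modified cdf $F_1$ collapses to $F$, and the strict inequality $\int h\,dF_1<\int h\,dF$ fails). So this boundary case is delicate in both proofs; it is not a defect specific to your approach.
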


\begin{proof}
Let $F$ be an equilibrium, then $F\in\cF'$ by Lemma~\ref{lemma:MFG4}. Define $\phi:=\Phi(F)^{conc}$ and $\psi:=\ell_1\wedge g(1)$, where $\ell_1$ is an affine function passing through $(1,\phi(1))$ whose slope lies in the super-differential of $\phi$ at $w=1$. We have $\ell_1\ge \psi\ge \phi\ge \Phi(F)$, $g^{-1}\circ \psi\ge g^{-1}\circ \Phi(F)\ge F\circ h^{-1}$ and $\psi(1)\le \ell_1(1)=\phi(1)=\Phi(F)^{conc}(1)$. 
By Lemma~\ref{lemma:MFG4}, $\Phi(F)(1+)<g(1)=\Phi(F)(\bar w_F+)\le \phi(\bar w_F)\le g(1)$, which implies that $\phi(1)<g(1)=\phi(\bar w_F)$. As $\phi$ is increasing and concave, it must be strictly increasing before reaching level $g(1)$. Consequently, $\ell_1$ has positive slope.
For any $y\in J(g)$, $A_y$ has positive length by Lemma~\ref{lemma:g-jump}. Since $F\circ h^{-1}=y<1$ on $A_y$, we must have $A_y\subseteq [0,\bar w_F)$ and $w_y< \bar w_F$. 
It remains to show properties (i)--(iii). Specifically, we show below that if one of these properties does not hold, then $\int_0^{h(\infty)} (1-g^{-1}\circ \psi(w))dw<1$. Applying Lemma~\ref{lemma:MFG3} with $\psi$ being the increasing concave function, this contradicts that~$F$ is an equilibrium.

(i) Let $\bar w_1:=\ell_1^{-1}(g(1))$. Since $\ell_1\ge \Phi(F)$, we necessarily have $\bar w_1\le \bar w_F$.
Suppose $\bar w_1<\bar w_F$, then $F\circ h^{-1}<1$ in a right neighborhood of $\bar w_1$. Together with $g^{-1}\circ \psi\ge F\circ h^{-1}$%
, we obtain
\begin{align*}
&\int_0^{h(\infty)} (1-g^{-1}\circ \psi(w))dw=\int_0^{\bar w_1} (1-g^{-1}\circ \psi(w))dw \\
 &\le \int_0^{\bar w_1} (1-F\circ h^{-1}(w))dw< \int_0^{h(\infty)} (1-F\circ h^{-1}(w))dw= \int h dF\le 1.
\end{align*}

(ii) Suppose $\Phi(F)(w_0)<\psi(w_0)$ for some $w_0\in A$. As $F\circ h^{-1}(w_0)\notin J(g)$, we have $F\circ h^{-1}(w_0)\le g^{-1}(\Phi(F)(w_0))<g^{-1}(\psi(w_0))$. By the right-continuity of $F\circ h^{-1}$ and $g^{-1}\circ \psi$, it follows that $F\circ h^{-1}<g^{-1}\circ \psi$ in a right neighborhood of $w_0$. 
Thus
\begin{align*}
\int_0^{h(\infty)} (1-g^{-1}\circ \psi(w))dw &< \int_0^{h(\infty)} (1-F\circ h^{-1}(w))dw= \int h dF\le 1.
\end{align*}

(iii) Let $y\in J(g)$. 
Suppose $\Phi(F)(w_y)<\ell_1(w_y)$.
Let $w_y':=\ell_1^{-1}(\Phi(F)(w_y))$. We have $w'_y<\ell_1^{-1}(\ell_1(w_y))= w_y < \bar w_F$ and $\ell_1(w'_y)=\Phi(F)(w_y)=g(y)<g(y+)\le g(1)$. 
Define 
\[F_1(x):=
\begin{cases}
F(x), & \text{if } x<h^{-1}(w'_y) \text{ or } x\ge h^{-1}(w_y),\\
F(h^{-1}(w_y))=y, & \text{if } h^{-1}(w'_y)\le x < h^{-1}(w_y).
\end{cases}\]
Clearly, $F_1\ge F$. For $x\in [h^{-1}(w'_y), h^{-1}(w_y))$, we have $h(x)\in [w'_y,w_y)$ and $F(x)=F\circ h^{-1}(h(x))<y=F_1(x)$ by the definition of $w_y$. It follows that $\int hdF_1<\int hdF\le 1$. 
Now, observe that
\[\psi(w)\ge \Phi(F_1)(w)=
\begin{cases}
\Phi(F)(w), & \text{if } w<w'_y \text{ or } w\ge w_y,\\
g(y), & \text{if } w'_y\le w < w_y.
\end{cases}\]
This implies
\begin{align*}
& \int_0^{h(\infty)} (1-g^{-1}\circ \psi(w))dw \le \int_0^{h(\infty)} (1-g^{-1}\circ \Phi(F_1)(w))dw\\
&\le \int_0^{h(\infty)} (1-F_1\circ h^{-1}(w))dw = \int h dF_1<1,
\end{align*}
completing the proof.
\end{proof}

\begin{remark}
When $g$ is continuous, $A=[0, h(\infty)]\cap \R$, and Lemma~\ref{lemma:MFG5} states that $\Phi(F)$ is affine before reaching level $g(1)$. 
\end{remark}

We can now complete the uniqueness argument.

\begin{proof}[Proof of Theorem~\ref{thm:MFE}---Uniqueness.]
Let $F$ be any equilibrium. By Lemma~\ref{lemma:atom}, $\Phi(F)(0)=g(F(0))=g(0+)=g(0)$. Let $\ell_1$ be the strictly increasing affine function given by Lemma~\ref{lemma:MFG5}.
In particular, we have $\ell_1(1)=\Phi(F)^{conc}(1)$ and $\ell_1(\bar w_F)=g(1)$, and $\Phi(F)(w)=\ell_1(w)\wedge g(1)$ whenever 
$F\circ h^{-1}(w)\notin J(g)$ or $w=w_y=\inf\{w\in [0, h(\infty)]\cap\R: F\circ h^{-1}(w)=y\}<\bar w_F$ for some $y\in J(g)$. 

We first find a formula for $\ell_1$.
Observe that either $F\circ h^{-1}(0)=F(0)\notin J(g)$ or $F(0)\in J(g)$ and $w_{F(0)}=0$. In both cases, $\ell_1(0)=\Phi(F)(0)=g(0)<g(1)$. %
We also have $\ell_1(1)=\Phi(F)^{conc}(1)=u^F(x_0)=\bar R$ by symmetry. This completely determines the shape of $\ell_1$; namely, 
\[\ell_1(w)=g(0)+(\bar R-g(0))w.\]

Next, recall $A, A_{y},w_y$ as defined in Lemma~\ref{lemma:MFG5}. We decompose $[0, h(\infty)]\cap \R$ into three disjoint parts: $A_1$, $A\backslash A_1$ and $\bigcup_{y\in J(g)}A_y$. Note that $A_1=[\bar w_F, h(\infty)]\cap\R = [\ell^{-1}_1(g(1)), h(\infty)]\cap\R$ by Lemma~\ref{lemma:MFG4} and Lemma~\ref{lemma:MFG5}\,(i), and that each $A_y$ with $y\in J(g)$ has positive length by Lemma~\ref{lemma:g-jump}.

(i) On $A_1$, we have $F\circ h^{-1} \equiv 1$.

(ii) On $A\backslash A_1$, we have $g(F\circ h^{-1})=\Phi(F)=\ell_1\wedge g(1)=\ell_1$ by Lemma~\ref{lemma:MFG5}\,(ii). The strict monotonicity of $\ell_1$ implies that $F\circ h^{-1}$ is strictly increasing on $A\backslash A_1$. Thus, $A_y$ is a singleton for all $y\notin J(g)\cup\{1\}$. The relation $g(F\circ h^{-1})=\ell_1$ also implies $F\circ h^{-1}\le g^{-1}\circ g (F\circ h^{-1})= g^{-1}(\ell_1)$. In view of Lemma~\ref{lemma:g-flat}, %
the inequality is in fact an equality. Indeed, 
any flat segment of $g$ induces a gap in the range of $F$, which precisely excludes those points $y$ for which $g^{-1}(g(y)) \neq y$, except possibly at the left end point of the flat segment, say $y_1$. The exception only happens if $F$ contains a flat segment at height $y_1$ which is equivalent to $A_{y_1}$ having positive length. Thus, $F\circ h^{-1}=y_1$ is also ruled out on $A\backslash A_1$.

(iii) On each $A_y$ with $y\in J(g)$, we use Lemma~\ref{lemma:MFG5}\,(iii) to obtain $g(y)=\Phi(F)(w_y)=\ell_1(w_y)$, which uniquely determines $w_y$.

In summary, we can decompose $[0, h(\infty)]\cap \R$ into (a)~countably many intervals on which $F\circ h^{-1}$ is flat at some level $y\in J(g)\cup\{1\}$ and (b)~the complementary set $A\backslash A_1$ on which $F\circ h^{-1}=g^{-1}(\ell_1)$.
Each flat segment at level $y\in J(g)$ has left endpoint $w_y=\ell_1^{-1}(g(y))$. To uniquely determine the right-continuous function $F\circ h^{-1}$, it only remains to specify, for each $y\in J(g)$, the right endpoint
\[\tilde w_y:=\sup \{w\in [0, h(\infty)]\cap \R: F\circ h^{-1}(w)=y\}\le \bar w_F\]
of the flat segment. To this end, let $y\in J(g)$ and $y':=\sup\{z\in [0,1]: g(z)=g(y+)\}$. We distinguish two cases:
\begin{itemize}
\item If $y'=y$, then $g$ is non-constant in any right-neighborhood~of $y$, which implies that  $F\circ h^{-1}(\tilde w_y)=y$. (If $F\circ h^{-1}(\tilde w_y)>y$, then $F$ would have an atom at $h^{-1}(\tilde w_y)$ and by Lemma~\ref{lemma:atom}, $g(F\circ h^{-1}(\tilde w_y))=g(y+)$, contradicting the assumption that $y'=y$.)
Let $w^{(m)}>\tilde w_y$ be a sequence such that $w^{(m)}\rightarrow \tilde w_y$ and let $y_m:=F\circ h^{-1}(w^{(m)})$. By right-continuity and the definition of $\tilde w_y$, we have $y_m\rightarrow y$ and $y_m>y$. For large $m$, we may assume $y_m<1$. Observe that $w^{(m)}\ge w_{y_m}>\tilde w_y$, which implies $w_{y_m}\rightarrow \tilde w_y$. 
If $y_m \notin J(g)$, then $w^{(m)} = w_{y_m}\in A\backslash A_1$ and $g(y_m)=\ell_1(w^{(m)})=\ell_1(w_{y_m})$ by (ii) above. If $y_m\in J(g)$, then $g(y_m)=\ell_1(w_{y_n})$ by (iii) above. Combining the two cases and passing to the limit, we obtain $g(y+)=\ell_1(\tilde w_y)$.

\item If $y'>y$, then by Lemma~\ref{lemma:g-flat}, $F$ jumps from $y$ to $y'$ at $F^{-1}(y+)=h^{-1}(\tilde w_y)$. Hence, $F\circ h^{-1}(\tilde w_y)=y'$ and $\Phi(F)(\tilde w_y)=g(y')=g(y+)$. We have either $y'\notin J(g)$ or $y'\in J(g)$ with $\tilde w_y=w_{y'}$, and both lead to $\ell_1(\tilde w_y)=\Phi(F)(\tilde w_y)=g(y+)$.
\end{itemize}
In both cases, we have $\ell_1(\tilde w_y)=g(y+)$, which uniquely determines $\tilde w_y$. 

Putting everything together and taking into account the right-continuity of $F\circ h^{-1}$,
\begin{align*}
F\circ h^{-1}(w)&=\begin{cases}
y, &  \text{if } y\in J(g) \text{ and } w\in [\ell_1^{-1}(g(y)), \ell_1^{-1}(g(y+))),\\
1, & \text{if } w\ge \ell^{-1}_1(g(1)),\\
g^{-1}(\ell_1(w)), & \text{otherwise.}
\end{cases}
\end{align*}
In summary, $F\circ h^{-1}(w)=g^{-1}(\ell_1(w)\wedge g(1))$, or $F(x)=F^*(x)$ after substituting 
$w=h(x)$.
This completes the proof of uniqueness.
\end{proof}

\section{Optimal Reward Design}

Consider a principal who may choose a normalized reward $R$ (i.e., satisfying $R(0)=0$ and $\bar R=1$) and whose goal is to maximize the performance of the top $\alpha\in (0,1)$ fraction of players. More precisely, the aim is to maximize the lowest stopping position of all players in the ranks $[0,\alpha)$,
\begin{align*}
x_{\alpha}
:=F^{-1}((1-\alpha)+) = F_{+}^{-1}(1-\alpha),
\end{align*}
where $F$ is the equilibrium resulting from $R$ and $F^{-1}_{+}$ is the right-continuous inverse of $F$. See Remark~\ref{rk:principal-RC-inverse} below for the technical importance of using $F_{+}^{-1}$, or equivalently, of using the open interval $[0,\alpha)$ when defining the top ranks. Note that the constant 
$\bar\mu_{\infty}=\bar\mu_{\infty}(R)$ in Theorem~\ref{thm:MFE} depends on~$R$. For the following result, we assume $\mu\leq0$ to ensure that $\mu<\bar\mu_{\infty}(R)$ holds for any reward~$R$. Alternately, one may relax this condition to $\mu<\frac{\sigma^{2}}{2x_{0}} \log (\frac{1}{1-\alpha})$ and restrict the principal to rewards~$R$ satisfying $\mu<\bar\mu_{\infty}(R)$.

\begin{theorem}\label{th:design}
  Let $\alpha\in (0,1)$. Then
  \[R^*(r)=\frac{1}{\alpha}1_{[0,\alpha)}(r)\]
  is the unique normalized reward maximizing the performance $x_{\alpha}$. The corresponding  value is $x_{\alpha}^{*}=h^{-1}\left(1/\alpha \right)$ and the equilibrium distribution is
  \[
  F^* = (1-\alpha) 1_{[0,x_\alpha^{*})}+ 1_{[x_{\alpha}^{*},\infty)}.
  \]
\end{theorem}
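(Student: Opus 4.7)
The plan is to reduce the principal's problem to an elementary maximization of a single scalar quantity---the left limit $R(\alpha-)$---by exploiting the explicit form of the equilibrium given by Theorem~\ref{thm:MFE}. First, I plug any normalized $R$ (with $R(1)=0$ and $\bar R=1$) into Theorem~\ref{thm:MFE} to obtain
\[
F(x)=g^{-1}\!\bigl(h(x)\wedge R(0)\bigr), \qquad g(y):=R(1-y),
\]
so every question about the equilibrium reduces to a question about $g^{-1}$, the right-continuous inverse of the left-continuous increasing function~$g$. Since $\mu\le 0$, Theorem~\ref{thm:MFE} applies without restriction on~$R$.

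Second, I compute $x_\alpha = F_+^{-1}(1-\alpha) = \inf\{x:F(x)>1-\alpha\}$ in closed form. Using the pseudo-inverse identity $g^{-1}(z)>y$ $\Leftrightarrow$ there exists $y'>y$ with $g(y')\le z$, together with $g((1-\alpha)+)=R(\alpha-)$ and the continuity of~$h$, I expect the clean formula
\[
x_\alpha = h^{-1}\!\bigl(R(\alpha-)\bigr).
\]
This is the one technically delicate step: one must verify the identity in both the case where $g$ is strictly increasing just to the right of $1-\alpha$ (so that $F(x)>1-\alpha$ requires the strict inequality $h(x)>R(\alpha-)$) and the case where $g$ is flat there (so equality is already allowed at $h(x)=R(\alpha-)$). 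In either case, continuity and strict monotonicity of $h$ identify the infimum with $h^{-1}(R(\alpha-))$. This is the main obstacle in the proof.

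Finally, I maximize $R(\alpha-)$ over normalized rewards. Since $R$ is decreasing, $R(r)\ge R(\alpha-)$ for $r<\alpha$, so
\[
1=\int_0^1 R(r)\,dr\ge \int_0^\alpha R(r)\,dr\ge \alpha\,R(\alpha-),
\]
which gives $R(\alpha-)\le 1/\alpha$ and hence $x_\alpha\le h^{-1}(1/\alpha)$. Equality forces $R\equiv 1/\alpha$ a.e.\ on $[0,\alpha)$ and $R\equiv 0$ a.e.\ on $[\alpha,1]$; combined with monotonicity and right-continuity, this pins down $R=R^*$ pointwise. Substituting $R^*$ into the equilibrium formula (or invoking Proposition~\ref{prop:MFG-cutoff}) then recovers the stated $F^*$ and optimal value $x_\alpha^*=h^{-1}(1/\alpha)$, completing the proof.
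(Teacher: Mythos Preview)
Your proposal is correct and follows essentially the same route as the paper: both reduce the principal's problem, via the explicit equilibrium of Theorem~\ref{thm:MFE}, to the identity $x_\alpha=h^{-1}(g((1-\alpha)+))=h^{-1}(R(\alpha-))$, and then maximize the scalar $R(\alpha-)$ under the normalization constraint. Your treatment is in fact slightly more detailed than the paper's---you spell out the case analysis behind the quantile computation and the integral inequality pinning down the maximizer, whereas the paper states these steps more tersely.
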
 

\begin{proof}
Let $R$ be an arbitrary normalized reward and $g(y)=R(1-y)$. 
By Theorem~\ref{thm:MFE}, the corresponding mean field equilibrium $F$ is unique and
\[F(x)=g^{-1}\left(h(x) \wedge g(1)\right)= 1\wedge \inf\{y: g(y)>h(x)\wedge g(1)\}.\]
We have $F(x)>1-\alpha$ if and only if $g(1-\alpha+\eps)\le h(x)\wedge g(1)$ for some $\eps=\eps(x)>0$, hence
\begin{align*}
F_+^{-1}(1-\alpha)
&=\inf\{x\ge 0: g(1-\alpha+\eps)\le h(x)\wedge g(1) \text{ for some $\eps>0$}\} \\
&=h^{-1}(g((1-\alpha)+)).
\end{align*}
As $h^{-1}$ is strictly increasing, maximizing this quantity is equivalent to maximizing $g((1-\alpha)+)$. Recalling that $g$ is monotone, left-continuous and $\int_0^1 g(y)dy=1$, the unique maximizer is given by $g^*(y):=R^{*}(1-y)$ and the corresponding maximum value is $h^{-1}\left(1/\alpha \right)$.
By Theorem~\ref{thm:MFE} (or Proposition~\ref{prop:MFG-cutoff}), the corresponding equilibrium is~$F^*$.
\end{proof} 

Comparing with the results cited in the Introduction (and recalled in more detail in Section~\ref{se:convOfOptDesign} below), Theorem~\ref{th:design} gives a clear-cut answer to a question which remained partially open in the $n$-player setting. On the other hand, the result illustrates that the mean field analysis alone could easily lead to an oversimplified picture: the optimal design in the $n$-player game is \emph{not} given by the cut-off reward at the target rank, in most cases. See also Section~\ref{se:MFconvergence} for further comparison of mean field and $n$-player games.

\begin{remark}\label{rk:principal-RC-inverse}
The principal's goal is to maximize $F_{+}^{-1}(1-\alpha)$ rather than the quantile $F^{-1}(1-\alpha)$. Indeed, the worst performance among the top $\alpha$-fraction of players need not be the same as the best performance among the bottom $(1-\alpha)$-fraction. The equilibrium $F^{*}$ of Theorem~\ref{th:design} has an atom of size $\alpha$ at $x^{*}_{\alpha}$ and an atom of size $1-\alpha$ at the origin. Thus, $(F^{*})_{+}^{-1}(1-\alpha)=x^{*}_{\alpha}$ but $(F^{*})^{-1}(1-\alpha)=0$.

It is crucial to formulate the principal's problem in the form stated above: if instead we aim to maximize the best performance in the quantile $F^{-1}(1-\alpha)$, the optimization \emph{fails to admit a solution.} 
To see this, note that for each $m\geq1$, the cutoff reward 
$R^{(m)}(r):=\frac{1}{\alpha+1/m}1_{[0,\alpha+1/m)}(r)$
gives rise to the equilibrium $F^{(m)}=(1-\alpha-1/m) 1_{[0,x_m)}+ 1_{[x_m,\infty)}$ where $x_m=h^{-1}(1/(\alpha+1/m))$. Moreover, $(F^{(m)})^{-1}(1-\alpha)=x_{m}$  increases to $h^{-1}(1/\alpha)$ as $m\to\infty$. However, there exists no equilibrium distribution $F$ achieving $F^{-1}(1-\alpha)=h^{-1}(1/\alpha)$. Indeed, by Theorem~\ref{th:design}, such $F$ would have to coincide with~$F^{*}$, but $(F^{*})^{-1}(1-\alpha)=0<h^{-1}(1/\alpha)$.
\end{remark}

\begin{remark}\label{rk:PoA}
In analogy to the ``price of anarchy'' we can compare the principal's optimization over equilibria with a different problem where the planner can dictate the players' stopping strategy (regardless of equilibrium considerations). This problem can be stated as
\[\max_{F\in \cF} F^{-1}_{+}(1-\alpha).\]
Using Lemma~\ref{le:attainableDistrib} we can check that the unique solution is $F=F^{*}$, the equilibrium distribution of Theorem~\ref{th:design}. In particular, the ``welfare'' of the second-best principal who can only choose the reward function is equal to that of a planner who can dictate strategies.

This consideration also shows a different avenue to  Theorem~\ref{th:design}: if one is only interested in this specific question rather than mean field equilibria for general reward functions, one can first argue that $\argmax_{F\in \cF} F^{-1}_{+}(1-\alpha)=F^*$ and then,  as in Proposition~\ref{prop:MFG-cutoff}, that $F^{*}$ is the unique equilibrium for $R^{*}$.

\end{remark}

\section{Convergence to the Mean Field}\label{se:MFconvergence}

To formulate the $n$-player game associated with our mean field contest, fix a decreasing, non-constant reward vector $(R_{1},\dots,R_{n})$. Here $R_{1}$ is interpreted as the reward for the best rank whereas $R_{n}$ is the worst. As in the mean field game, the players are ranked according to their level of stopping and ties are split uniformly at random. The set $\cF$ of feasible stopping distributions remains the same and the definition of equilibrium is analogous. It is shown in~\cite{NutzZhang.21a} that the $n$-player game admits a unique equilibrium $F^*_n\in\cF$ as soon as the drift $\mu$ satisfies
\[
\mu< \bar \mu_n:=\frac{\sigma^{2}}{2x_{0}} \log\left( \frac{R_{1} - R_{n}}{R_{1} -\bar R_{n}}\right), \quad\mbox{where}\quad \bar R_{n}:=\frac{1}{n}\sum_{k=1}^n R_k.
\]
The equilibrium distribution has compact support $[0,\bar{x}_{n}]$ and cdf
\[F^*_n(x)=g_n^{-1}(u^*_n(x)),
\quad\mbox{where}\quad \bar x_n=h^{-1}\left(\frac{R_1-R_n}{\bar R-R_n}\right) \quad\mbox{and}
\quad \]
\[g_n(y)=\sum_{k=1}^{n} R_{k} {{n-1}\choose{k-1}} y^{n-k}(1-y)^{k-1}, \quad\mbox{and}
\quad u^*_n(x)
=\left[R_n+\left(\bar R_{n}-R_n\right)h(x)\right]\wedge R_1\]
is the equilibrium value function.
In contrast to the mean field setting, $F^*_n$ is always atomless. %
Moreover, $g_{n}$ is strictly increasing and smooth, hence so is its (true) inverse $g_{n}^{-1}$.

\subsection{Convergence of the $n$-Player Equilibrium}

The next result shows that if the reward vector is induced by a reward function for the mean field game, the $n$-player equilibrium distributions and  value functions converge to their mean field counterparts as described in Theorem~\ref{thm:MFE}. 

\begin{theorem}\label{th:equilibConvergence}
Let $R:[0,1]\rightarrow \R_+$ be a reward function, $\mu<\bar \mu_\infty$, and define $R_k:=R(k/n)$ for $k=1, \ldots, n$. Then as $n\to\infty$, using $(R_{1},\dots,R_{n})$ as reward for the $n$-player game and~$R$ for the mean field game, the associated unique equilibrium distributions converge weakly and the equilibrium value functions converge uniformly on compact sets.
\end{theorem}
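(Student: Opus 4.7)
The plan is to exploit the closed-form expressions for both equilibria. Writing $v_n(x):=[R_n+(\bar R_n-R_n)h(x)]\wedge R_1$ and $v(x):=[R(1)+(\bar R-R(1))h(x)]\wedge R(0)$, we have $u_n^\ast=v_n$, $u^\ast=v$, and $F_n^\ast=g_n^{-1}\circ v_n$, $F^\ast=g^{-1}\circ v$, so the whole problem reduces to convergence of the affine pieces $v_n\to v$ and of the monotone rearrangements $g_n^{-1}\circ v_n\to g^{-1}\circ v$ at continuity points of $F^\ast$. The coefficients converge elementarily: $R_n=R(1)$ for every $n$, $R_1=R(1/n)\to R(0+)=R(0)$ by right-continuity, and $\bar R_n=n^{-1}\sum_{k=1}^n R(k/n)\to\bar R$ by the Riemann-sum argument for bounded monotone functions. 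The elementary inequality $|a\wedge b-c\wedge d|\le|a-c|+|b-d|$ applied to the closed form then yields $\sup_{x\in K}|u_n^\ast(x)-u^\ast(x)|\le|\bar R_n-\bar R|\sup_K h+|R_1-R(0)|\to 0$ for any compact $K\subset\R_+$, handling the value-function statement.

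For the distributional statement, I would first establish pointwise convergence $g_n(y)\to g(y)$ at every continuity point of $g$. Reindexing the defining sum and introducing $Z\sim\mathrm{Bin}(n-1,1-y)$,
\[
g_n(y)=\sum_{j=0}^{n-1}R\!\left(\tfrac{j+1}{n}\right)\binom{n-1}{j}(1-y)^j y^{n-1-j}=\E\!\left[R\!\left(\tfrac{Z+1}{n}\right)\right].
\]
The weak law of large numbers gives $(Z+1)/n\to 1-y$ in probability; combined with boundedness of $R$, dominated convergence yields $g_n(y)\to R(1-y)=g(y)$ at every $y$ where $R$ is continuous at $1-y$. Since $R$ is monotone, the complementary set is countable.

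I would then upgrade to an inverse-convergence statement via the standard Helly-type lemma: if increasing $g_n\to g$ on a dense set, then $g_n^{-1}(z_n)\to g^{-1}(z)$ whenever $z_n\to z$ and $z$ is a continuity point of the right-continuous inverse $g^{-1}$. The proof is a sandwich: with $y^\ast:=g^{-1}(z)$, continuity of $g^{-1}$ at $z$ means $g$ is not flat at level $z$, so one has $g(y^\ast-\varepsilon)<z<g(y^\ast+\varepsilon)$ strictly; choosing $y^\ast\pm\varepsilon$ to be continuity points of $g$ and using $g_n\to g$ at those two points forces $y^\ast-\varepsilon\le g_n^{-1}(z_n)\le y^\ast+\varepsilon$ for large $n$. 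Since $v$ is continuous and monotone, the continuity points of $F^\ast=g^{-1}\circ v$ are exactly those $x$ where $g^{-1}$ is continuous at $v(x)$; at such $x$, the lemma combined with the preceding two steps gives $F_n^\ast(x)=g_n^{-1}(v_n(x))\to g^{-1}(v(x))=F^\ast(x)$, which is weak convergence.

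The most delicate point is the mismatch between discontinuities of $g$ and of $g^{-1}$: a CLT calculation shows that at a jump point $y$ of $g$ (equivalently, of $R$), $g_n(y)$ converges to the midpoint $\tfrac12(g(y)+g(y+))$ rather than to $g(y)$, so pointwise convergence of $g_n$ genuinely fails there. The reconciliation is that flat segments of $g$ (not its jumps) produce jumps of $g^{-1}$ and hence atoms of $F^\ast$, whereas jumps of $g$ produce flat stretches of $g^{-1}$ and hence gaps in the support of $F^\ast$; the former are exactly the values $z=v(x)$ at which we do not require convergence of the inverses, and the latter are compatible with continuity of $g^{-1}$ at the relevant $z$'s. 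Since $F^\ast$ has only countably many points of discontinuity, weak convergence tolerates the pathological evaluations.
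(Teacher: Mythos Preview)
Your proof is correct and follows essentially the same route as the paper: elementary convergence of the coefficients $R_1,R_n,\bar R_n$, pointwise convergence $g_n\to g$ at continuity points via a binomial/law-of-large-numbers representation (the paper writes this equivalently through the empirical cdf of uniforms), followed by the standard inverse-convergence lemma applied to $z_n=v_n(x)\to v(x)$. The only cosmetic difference is that for the value functions you give an explicit Lipschitz bound via $|a\wedge b-c\wedge d|\le|a-c|+|b-d|$, whereas the paper argues pointwise convergence and then invokes the general fact that monotone functions converging pointwise to a continuous limit converge locally uniformly; your version is slightly more quantitative but otherwise equivalent.
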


\begin{remark}
  If we consider a generalized reward function~$R$ with $R(1-)<R(1)$ as discussed in Remark~\ref{rk:LC-necessary}, the limit of the $n$-player equilibria selects a particular equilibrium among the infinitely many mean field equilibria; namely, the one detailed in Theorem~\ref{thm:MFE}. This follows from the fact that the proof of Theorem~\ref{th:equilibConvergence} does not use the condition $R(1-)=R(1)$.
\end{remark}

Before proceeding with the proof, we state a formula that will be used in later arguments as well. Consider the empirical cdf of i.i.d.\ uniform random variables $\{U_i\}_{i=1, \ldots, n-1}$ on $[0,1]$, 
\[\hat F_{n-1}(y)=\frac{1}{n-1}\#\{i: U_i\le y\}.\]
Let $0\le y_1\le y_2\le 1$. 
Among the $n-1$ random variables~$\{U_i\}$, there are $I_{n-1}=(n-1)(1-\hat F_{n-1}(y_2))$ with values above $y_2$, $J_{n-1}=(n-1)\hat F_{n-1}(y_1)$ below $y_1$, and $K_{n-1}=(n-1)(\hat F_{n-1}(y_2)-\hat F_{n-1}(y_1))$ in-between $y_1$ and $y_2$. 
Thus, we have the following formula for any function $\phi(i,j,k)$: 
\begin{equation}\label{eq:payoff}
\begin{aligned}
&\sum_{\substack{i,j,k\ge 0 \\ i+j+k= n-1}} \phi(i,j,k) {{n-1}\choose{i,j,k}}(1-y_2)^i y_1^j (y_2-y_1)^k\\
&=\sum_{\substack{i,j,k\ge 0 \\ i+j+k= n-1}} \phi(i,j,k) P(I_{n-1}=i, J_{n-1}=j, K_{n-1}=k)\\
&=E\left[\phi\left((n-1)(1-\hat F_{n-1}(y_2)), (n-1)\hat F_{n-1}(y_1), (n-1)(\hat F_{n-1}(y_2)-\hat F_{n-1}(y_1))\right)\right].
\end{aligned}
\end{equation}

\begin{proof}[Proof of Theorem~\ref{th:equilibConvergence}.]
We have $R_n=R(1)$ and $R_1=R(1/n)\rightarrow R(0)$ by the right-continuity of~$R$. Moreover, the Riemann sum $\frac{1}{n}\sum_{k=1}^n R_k\rightarrow \int_0^1 R(r)dr=\bar R$. It follows that $\bar \mu_n\rightarrow \bar \mu_\infty$, so that $\mu<\bar \mu_\infty$ ensures $\mu<\bar \mu_n$ for all $n$ sufficiently large and the equilibria are uniquely defined. The pointwise convergence of $u^*_n$ to $u^*$ is clear from their respective formulas. As these functions are increasing and $u^*$ is continuous, the pointwise convergence is locally uniform (see e.g.\ \cite[Proposition 2.1]{Resnick.07}).

To show the weak convergence of the equilibrium distributions, we prove $F_n^\ast(x)\rightarrow F^*(x)$ whenever~$x$ is a point of continuity of $F^*$.
We first argue that
\begin{equation}\label{eq:gn-conv}
g_n\rightarrow g \quad \text{at  at every point of continuity of $g$.}
\end{equation}
Taking $y_1=y_2=y$ and $\phi(i,j,k)=R_{i+1}=R((i+1)/n)$ in \eqref{eq:payoff}, we obtain
\begin{equation}\label{eq:gn}
g_n(y)%
=E \left[R\left(\frac{(n-1)(1-\hat F_{n-1}(y))+1}{n}\right)\right]=E\left[g\left(\frac{n-1}{n}\hat F_{n-1}(y)\right)\right].
\end{equation}
By the strong law of large numbers, $\hat F_{n-1}(y)\rightarrow y$ a.s.\ for each $y$. If $y$ is a point of continuity of $g$, it follows that $g\big(\frac{n-1}{n}\hat F_{n-1}(y) \big)\rightarrow g(y)$ a.s.\ and the bounded convergence theorem yields $g_n(y)\rightarrow g(y)$ as claimed.

We have $F_{n}^*(x)=g_{n}^{-1}(z_{n})$ for $z_{n}:=u^*_n(x)$ and similarly $F^*(x)=g^{-1}(z)$ for $z:=u^*(x)$. By the above, $z_{n}\to z$  for all $x$, and therefore we need to show that $g^{-1}_{n}(z_{n})\to g^{-1}(z)$ whenever $z\in C$, where $C$ is the set of continuity points of $g^{-1}$.
Up to normalization, we may think of $g_{n},g$ as cdf of weakly converging distributions. It is then known that the inverses $g_{n}^{-1}$ converge to $g^{-1}$ on the set where the left- and right-continuous inverses of $g$ coincide, and hence on~$C$ (cf.\ the proof of 	\cite[Theorem 3.2.2, p.\,100]{Durrett.10}).
We have $g_{n}^{-1}(z\pm\eps)\to g^{-1}(z\pm\eps)$ for $z\in C$ and $\eps>0$ with $z\pm\eps\in C$. Using that $g_{n}^{-1},g^{-1}$ are monotone, we deduce that $g_{n}^{-1}(z_{n})\to g^{-1}(z)$ whenever $z_{n}\to z$ and $z\in C$, as desired.
\end{proof}

\begin{remark}
A discussion related to Theorem~\ref{th:equilibConvergence} can be found in the work \cite{FangNoe.16} on $n$-player capacity-constrained contests, which can be related to the present game via Skorokhod embedding. (Some results of the preprint~\cite{FangNoe.16} were later published as~\cite{FangNoeStrack.20}.) Namely, \cite[Proposition~9]{FangNoe.16} studies the effect of scaling the $n$-player contest by multiplying the number of participants while dividing the reward at each rank.  
This basically corresponds to taking $\lim_{n} F_{n}^{*}$, if only for the particular case where~$R$ is a step function. An infinite player game is not considered, so that the limiting distribution~$F^{*}$ cannot be recognized as a mean field object. Instead, the authors derive an involved algorithm~\cite[Remark~A-1]{FangNoe.16} to construct~$F^{*}=\lim_{n} F_{n}^{*}$. The limit is a step function in this particular case, and the algorithm determines the~$n-1$ jump locations and magnitudes. It seems that the simple representation~\eqref{eq:MFE}, or the game-theoretic meaning of~$F^{*}$, were not identified.
\end{remark}

\subsection{$\eps$-Nash Equilibrium Property of the Mean Field Strategy}

Recall that $\xi^F(x)$ denotes the payoff for stopping at~$x$ if all other players in the mean field game use~$F$; cf.~\eqref{eq:defxi}. Analogously, we can define the expected payoff $\xi^F_n(x)$ in the $n$-player game. 
We say that
$F^*\in\cF$ is an \emph{$\eps$-Nash equilibrium} of the $n$-player game if 
\[\int \xi^{F^*}_n dF^*\ge \int \xi^{F^*}_n dF-\eps \quad \mbox{for all}\quad F\in\cF.\]
That is, a player deviating unilaterally from~$F^*$ can improve her expected payoff by at most~$\eps$. Correspondingly, $F^*$ is an \emph{$o(1)$-Nash equilibrium} as $n\to \infty$ if for any $\eps>0$,  the above holds for all large~$n$, or equivalently
\[\lim_{n\rightarrow\infty }\sup_{F\in\cF}\left(\int \xi_n^{F^*}dF - \int \xi_n^{F^*}dF^*\right)=0.\]
We can now state the main result of this subsection.

\begin{theorem}\label{th:epsEquilibrium}
Let $R$ be a reward function, $\mu<\bar \mu_\infty$, and let $F^*$ be the associated mean field equilibrium. Define $R_k:=R(k/n)$, $k=1, \ldots, n$ as reward for the $n$-player game. Then $F^*$ is an $o(1)$-Nash equilibrium of the $n$-player game as $n\to \infty$ if and only if~$R$ is continuous.
\end{theorem}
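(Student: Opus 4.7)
My plan is to use the optimal-stopping identity $\sup_{F\in\cF}\int \xi_n^{F^*}\,dF=(\xi_n^{F^*}\circ h^{-1})^{conc}(1)$ (cf.~\eqref{minimization1b}) to rewrite the deviation advantage and then to identify the pointwise behaviour of $\xi_n^{F^*}$. The multinomial representation~\eqref{eq:payoff} gives
\[
\xi_n^{F^*}(x)=E\!\left[\tfrac{1}{K+1}\sum_{\ell=0}^K R\!\left(\tfrac{I+\ell+1}{n}\right)\right],
\]
where $(I,K)$ are multinomial counts determined by $(1-F^*(x),F^*(x)-F^*(x-))$. The LLN always identifies the limit of $\xi_n^{F^*}(x)$ with $\xi^{F^*}(x)$ \emph{except} when the rank $r=1-F^*(x)$ is a discontinuity point of $R$; at such points a CLT-driven half-half splitting will appear, and that is precisely what discriminates the two directions.

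For the \emph{if} direction, assume $R$ is continuous. At a non-atom point $x$, $K=0$ a.s., so $\xi_n^{F^*}(x)=E[R((I+1)/n)]$ with $I\sim\text{Bin}(n-1,1-F^*(x))$, which converges to $R(1-F^*(x))=\xi^{F^*}(x)$ by the LLN and continuity of $R$. At an atom $x^*$ of mass $y_2-y_1$, the tie-broken Riemann sum converges to $(y_2-y_1)^{-1}\int_{y_1}^{y_2}g(y)\,dy=\xi^{F^*}(x^*)$. I would upgrade to uniform convergence on $[0,h(\infty)]$ by continuous convergence ($x_n\to x\Rightarrow\xi_n^{F^*}(x_n)\to\xi^{F^*}(x)$); the key ingredient is that continuity of $R$ makes $\xi^{F^*}$ continuous everywhere, since by Theorem~\ref{thm:MFE} atoms of $F^*$ correspond to flat pieces of $R$, so $g$ is constant across each atom and the left, right, and tie-broken limits all agree. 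Beyond $\bar x$, $\xi_n^{F^*}\equiv R_1\to R(0)$ uniformly. Because the concave envelope is $1$-Lipschitz in the sup norm, $(\xi_n^{F^*}\circ h^{-1})^{conc}(1)\to (\xi^{F^*}\circ h^{-1})^{conc}(1)=\bar R$, and bounded convergence yields $\int \xi_n^{F^*}\,dF^*\to\bar R$, delivering the $o(1)$-Nash property.

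For the \emph{only if} direction, a jump of $R$ is necessarily located at some $r_0\in(0,1)$ (right-continuity rules out $r_0=0$; the standing assumption $R(1-)=R(1)$ rules out $r_0=1$). Set $y_0:=1-r_0\in J(g)$; by Lemma~\ref{lemma:g-jump}, $F^*$ has a flat segment at level $y_0$ on some $[a,b)$ with $a<b$. For $x\in(a,b)$ no other player lies in the open gap, so the player's rank is $K+1$ with $K\sim\text{Bin}(n-1,1-y_0)$, and a CLT argument (the mean of $K+1$ differs from $nr_0$ by $O(1)$, negligible against the CLT scale $\sqrt n$) yields
\[
\xi_n^{F^*}(x)\xrightarrow[n\to\infty]{}\tfrac12\bigl(R(r_0-)+R(r_0)\bigr)=\tfrac12\bigl(g(y_0+)+g(y_0)\bigr)>g(y_0)=\xi^{F^*}(x).
\]
To convert this knife-edge boost into a non-vanishing deviation advantage, let $\ell(w):=R(1)+(\bar R-R(1))w$; formula~\eqref{eq:MFE} yields $\ell(h(a))=g(y_0)$ and $\ell(h(b))=g(y_0+)$. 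Pick $x'\in(a,b)$ with $h(x')<(h(a)+h(b))/2$, so that $\ell(h(x'))<\tfrac12(g(y_0)+g(y_0+))$, and consider the two-atom deviation
\[
F_{\mathrm{dev}}:=\begin{cases}\tfrac{1}{h(x')}\delta_{x'}+\bigl(1-\tfrac{1}{h(x')}\bigr)\delta_0, & h(x')\ge 1,\\ p\,\delta_{x'}+(1-p)\,\delta_{\bar x},\ \ p:=\tfrac{\bar w_F-1}{\bar w_F-h(x')}, & h(x')<1.\end{cases}
\]
A direct check gives $\int h\,dF_{\mathrm{dev}}=1$ in either case, hence $F_{\mathrm{dev}}\in\cF$, and the auxiliary atom sits at $x''\in\{0,\bar x\}$ with $\xi^{F^*}(x'')=\ell(h(x''))\in\{R(1),R(0)\}$. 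Pointwise convergence of $\xi_n^{F^*}$ at $x'$ and $x''$, combined with the linear identity $\bar R=\ell(1)=p\ell(h(x'))+(1-p)\ell(h(x''))$, gives
\[
\lim_n \int \xi_n^{F^*}\,dF_{\mathrm{dev}}-\bar R = p\bigl(\tfrac12(g(y_0)+g(y_0+))-\ell(h(x'))\bigr)>0.
\]
Since $\int \xi_n^{F^*}\,dF^*\to\bar R$ by bounded convergence (the support of $F^*$ misses the open gap, and at any boundary atom a direct CLT computation shows the same limit $\xi^{F^*}$), the deviation advantage is bounded below by a positive constant, contradicting the $o(1)$-Nash property.

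The main obstacle I anticipate is the uniform-convergence step in the \emph{if} direction: continuous convergence must be handled carefully when atoms of $F^*$ accumulate (possible if $R$ has infinitely many flat pieces), and the $1$-Lipschitz estimate for the concave envelope has to be justified over the possibly unbounded domain $[0,\infty)$ that arises when $\mu\le 0$. The \emph{only if} direction is more mechanical once the CLT half-half splitting and the identities $\ell(h(a))=g(y_0),\,\ell(h(b))=g(y_0+)$ are secured.
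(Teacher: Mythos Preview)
Your proposal is correct in both directions, but the routes differ from the paper in interesting ways.

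\textbf{Sufficiency.} The paper proceeds more directly: once uniform convergence $\|\xi_n^{F^*}-\xi^{F^*}\|_\infty\to 0$ is established, one writes $\int\xi_n^{F^*}dF-\int\xi_n^{F^*}dF^*\le 2\|\xi_n^{F^*}-\xi^{F^*}\|_\infty+\bigl(\int\xi^{F^*}dF-\int\xi^{F^*}dF^*\bigr)$ and uses the equilibrium property for the middle term. Your detour through the concave envelope and its $1$-Lipschitz behaviour is valid but unnecessary. More importantly, your flagged obstacle about accumulating atoms disappears once you note that each $\xi_n^{F^*}$ is \emph{increasing} (stopping higher never lowers rank): pointwise convergence of monotone functions to a continuous limit is automatically locally uniform, so no continuous-convergence argument is needed. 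This is how the paper handles it.

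\textbf{Necessity.} Here your approach is genuinely different and cleaner. The paper perturbs $F^*$ itself by shifting mass from thin neighbourhoods of the gap endpoints into the gap, which requires a careful choice of the density $f=1_{(a-\eta,a]}+\lambda 1_{[b,\infty)}$ and parameters $\eta,\lambda,a'$. Your explicit two-atom deviation $F_{\mathrm{dev}}$ supported on $\{x',0\}$ or $\{x',\bar x\}$ is more transparent and exploits the linear structure of $\ell$ directly. Two small simplifications: (i) you only need $\liminf_n\xi_n^{F^*}(x')\ge\tfrac12(g(y_0)+g(y_0+))$, which is exactly the paper's Lemma~\ref{lemma:CLT}; (ii) the convergence $\int\xi_n^{F^*}dF^*\to\bar R$ follows immediately from symmetry, since $\int\xi_n^G\,dG=\bar R_n$ for \emph{any} $G$ (each of $n$ exchangeable players has expected payoff $\bar R_n$), so your bounded-convergence argument is not needed.
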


The positive result in Theorem~\ref{th:epsEquilibrium} is consistent with a large body of literature; cf.\ the Introduction. That the continuity condition is sharp, may be surprising. Indeed we will show that if~$R$ has a jump and $\eps>0$ is small enough, then $F^{*}$ is not an $\eps$-Nash equilibrium, for \emph{all} large~$n$. This is not related to atoms in the equilibrium but rather to the gap in the support of~$F^{*}$ caused by the jump $R(x)-R(x-)$ in reward and a stochastic knife-edge phenomenon. The idea of the proof is that a player can improve by suitably shifting some mass of the stopping distribution \emph{into the gap.} A level of stopping inside the gap would imply the reward~$R(x)$ in the mean field game, but in the $n$-player game, the result depends on the sample---the reward is approximately~$R(x)$ in roughly half the samples, but the higher reward~$R(x-)$ in the other half. By shifting more mass from below the gap than from above (all while maintaining feasibility), the player can increase the payoff relative to~$F^{*}$.

The proof of Theorem~\ref{th:epsEquilibrium} occupies the remainder of this subsection. Throughout the proof, the rewards and~$F^{*}$ are defined as in Theorem~\ref{th:epsEquilibrium}. As a first step, we derive a convenient formula for $\xi^F_n(x)$. The probability that among players $2, \ldots, n$, there are exactly $i$ players stopping above $x$, $j$ players below $x$, and $k$ players at $x$, is given by
\[ {{n-1}\choose{i,j,k}}(1-F(x))^i F(x-)^j (F(x)-F(x-))^k.\]
Such a configuration leads to an average payoff 
$(R_{i+1}+\cdots+ R_{i+k+1})/(k+1)$
for player~$1$ as ties are broken randomly. It follows that
\begin{align*}
\xi^F_n(x)&=\sum_{\substack{i,j,k\ge 0 \\ i+j+k= n-1}} \frac{R_{i+1}+\cdots+ R_{n-j}}{k+1} {{n-1}\choose{i,j,k}}(1-F(x))^i F(x-)^j (F(x)-F(x-))^k.
\end{align*}
This reduces to $g_n(F(x))$ if $F(x)=F(x-)$. Taking $\phi(i,j,k)=(R_{i+1}+\cdots+ R_{i+k+1})/(k+1)$ in \eqref{eq:payoff}, we have the alternative representation
\begin{equation}\label{eq:xi_n}
\xi^F_n(x)
=E\left[\frac{\sum_{\ell=(n-1)(1-\hat F_{n-1}(F(x)))+1}^{n-(n-1)\hat F_{n-1}(F(x-))} R_\ell}{(n-1)(\hat F_{n-1}(F(x))-\hat F_{n-1}(F(x-)))+1}\right].
\end{equation}

\begin{lemma}\label{lemma:xi-conv0}
Let $F\in \cF$ have an atom at~$x$. Then 
$\lim_n \xi_n^F(x)=\xi^F(x)$.
\end{lemma}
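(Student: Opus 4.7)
The plan is to start from the explicit formula \eqref{eq:xi_n}, identify the terms inside the expectation as an empirical average of rewards over a random block of ranks, and show that this average converges almost surely to the mean-field tie-breaking integral; dominated convergence will then upgrade to convergence of expectations.

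More concretely, set $y_1:=F(x-)$ and $y_2:=F(x)$, so that $y_2>y_1$ by the atom assumption. Writing $I_{n-1}:=(n-1)(1-\hat F_{n-1}(y_2))$, $J_{n-1}:=(n-1)\hat F_{n-1}(y_1)$ and $K_{n-1}:=(n-1)(\hat F_{n-1}(y_2)-\hat F_{n-1}(y_1))$ as in the derivation of \eqref{eq:payoff}, the expression inside the expectation in \eqref{eq:xi_n} becomes
\[
S_n:=\frac{1}{K_{n-1}+1}\sum_{\ell=I_{n-1}+1}^{I_{n-1}+K_{n-1}+1} R(\ell/n),
\]
an average of $R$ evaluated at $K_{n-1}+1$ consecutive ranks. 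By the Strong Law of Large Numbers, $\hat F_{n-1}(y_i)\to y_i$ almost surely, so that $(I_{n-1}+1)/n\to 1-y_2$, $(I_{n-1}+K_{n-1}+1)/n\to 1-y_1$ and $(K_{n-1}+1)/n\to y_2-y_1>0$ almost surely.

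The key step is then to argue that, along the preceding almost sure event, $S_n$ converges to $\frac{1}{y_2-y_1}\int_{1-y_2}^{1-y_1}R(r)\,dr$. Since $R$ is monotone and bounded on $[0,1]$, it is Riemann integrable, so the Riemann sums $\frac{1}{n}\sum_{\ell=1}^{\lfloor ns\rfloor}R(\ell/n)$ converge to $\int_0^s R(r)\,dr$ uniformly in $s\in[0,1]$; applied at the (random) endpoints above, this gives
\[
\frac{1}{n}\sum_{\ell=I_{n-1}+1}^{I_{n-1}+K_{n-1}+1} R(\ell/n)\;\longrightarrow\;\int_{1-y_2}^{1-y_1} R(r)\,dr\quad\text{a.s.},
\]
and multiplying by $n/(K_{n-1}+1)\to 1/(y_2-y_1)$ yields $S_n\to \frac{1}{y_2-y_1}\int_{1-y_2}^{1-y_1}R(r)\,dr$ a.s. The substitution $r=1-y$ identifies this limit with $\frac{1}{y_2-y_1}\int_{y_1}^{y_2} g(y)\,dy=\xi^F(x)$ as defined in \eqref{eq:defxi}.

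Finally, since $0\le S_n\le R(0)$ uniformly in $n$, the dominated convergence theorem gives $\xi_n^F(x)=E[S_n]\to \xi^F(x)$. I expect the mildly subtle point to be the uniform control of Riemann sums with random, data-driven endpoints when $R$ is only monotone (not continuous); this is handled by the uniform convergence of $s\mapsto \tfrac{1}{n}\sum_{\ell\le ns}R(\ell/n)$ to the continuous function $s\mapsto\int_0^s R(r)\,dr$, which follows from Dini-type arguments since both sides are monotone in $s$ and the limit is continuous.
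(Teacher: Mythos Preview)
Your proposal is correct and follows essentially the same route as the paper: both express $\xi_n^F(x)$ via \eqref{eq:xi_n} as the expectation of a random Riemann-type average, use the SLLN for $\hat F_{n-1}$ to get a.s.\ convergence of the endpoints (hence of the average to the tie-breaking integral), and then invoke bounded/dominated convergence. You are somewhat more explicit than the paper about the Riemann-sum step with random endpoints (via the uniform convergence of $s\mapsto \frac1n\sum_{\ell\le ns}R(\ell/n)$ to the continuous $s\mapsto\int_0^s R$), which the paper simply asserts.
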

\begin{proof}
Let $F$ have an atom at $x$. Write $y_1=F(x-)$ and $y_2=F(x)$.
By \eqref{eq:xi_n},
\begin{align*}
\xi^F_n(x)
&=E\left[\frac{\sum_{\ell=n-(n-1)\hat F_{n-1}(y_2)}^{n-(n-1)\hat F_{n-1}(y_1)} g\left(\frac{n-\ell}{n}\right)}{(n-1)(\hat F_{n-1}(y_2)-\hat F_{n-1}(y_1))+1}\right]\\
&=E\left[\frac{n}{(n-1)(\hat F_{n-1}(y_2)-\hat F_{n-1}(y_1))+1}\sum_{\ell=(n-1)\hat F_{n-1}(y_1)}^{(n-1)\hat F_{n-1}(y_2)} g\left(\frac{\ell}{n}\right)\frac{1}{n}\right].
\end{align*}
Using the a.s.\ convergence of $\hat F_{n-1}(y)$ to $y$, we deduce that
\begin{equation*}
\xi^F_n(x)\rightarrow \frac{1}{y_2-y_1}\int_{y_1}^{y_2}g(y)dy =\xi^F(x).
\tag*{\qedhere}
\end{equation*}
\end{proof}

\begin{lemma}\label{lemma:xi-cov}
Let $R$ be continuous. Then $\xi^{F^*}_n(\cdot)$ converges to $\xi^{F^*}(\cdot)$ uniformly.
\end{lemma}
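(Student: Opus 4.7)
The plan is to reduce the claim to the uniform consistency of the empirical cdf~$\hat F_{n-1}$. By Lemma~\ref{lemma:atom}, $\xi^{F^{*}}(x)=g(F^{*}(x))$ for every $x\ge 0$, so it suffices to show $\sup_{x\ge 0}|\xi^{F^{*}}_{n}(x)-g(F^{*}(x))|\to 0$. Since $F^{*}$ has compact support $[0,\bar x]$ and $F^{*}\equiv 1$ beyond, for $x>\bar x$ the player outranks all opponents and receives the deterministic payoff $R_{1}=R(1/n)$, which converges to $R(0)=g(1)$ by the (now assumed) continuity of~$R$; this handles $x>\bar x$ uniformly. I therefore focus on $x\in[0,\bar x]$.

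Next, I exploit an \emph{atom-constancy} property coming from equilibrium. For $x\in[0,\bar x]$, write $y_{1}:=F^{*}(x-)$ and $y_{2}:=F^{*}(x)$. When $y_{1}<y_{2}$, Lemma~\ref{lemma:atom} gives that $g$ is constant on $(y_{1},y_{2}]$; rewriting through $g(y)=R(1-y)$, $R$ is constant on $[1-y_{2},1-y_{1})$, and continuity of $R$ extends the constancy to the closed interval $[1-y_{2},1-y_{1}]$. Thus, whether or not $F^{*}$ has an atom at~$x$, $R\equiv g(y_{2})$ on the (possibly degenerate) interval $[1-y_{2},1-y_{1}]$. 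This observation is the linchpin of a uniform estimate, since it bypasses the fact that $y_{2}-y_{1}$ need not be small at atoms.

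From representation~\eqref{eq:xi_n}, each summand $R(\ell/n)$ inside $\xi^{F^{*}}_{n}(x)$ satisfies $\ell/n\in[m_{n}/n,M_{n}/n]$, where $m_{n}=(n-1)(1-\hat F_{n-1}(y_{2}))+1$ and $M_{n}=n-(n-1)\hat F_{n-1}(y_{1})$. Let $\omega_{R}$ denote the modulus of continuity of $R$ on $[0,1]$ and fix $\delta>0$. On the Dvoretzky--Kiefer--Wolfowitz event $\Omega_{n,\delta}:=\{\sup_{y\in[0,1]}|\hat F_{n-1}(y)-y|\le\delta\}$ one has $m_{n}/n\ge 1-y_{2}-\delta-1/n$ and $M_{n}/n\le 1-y_{1}+\delta+1/n$, hence every $\ell/n$ lies within distance $\delta+1/n$ of $[1-y_{2},1-y_{1}]$. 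The atom-constancy property then yields $|R(\ell/n)-g(y_{2})|\le\omega_{R}(\delta+1/n)$ on $\Omega_{n,\delta}$. Averaging inside the expectation and combining with the DKW bound $P(\Omega_{n,\delta}^{c})\le 2e^{-2(n-1)\delta^{2}}$ and the crude estimate $0\le R\le R(0)$ gives
\begin{equation*}
\sup_{x\in[0,\bar x]}\bigl|\xi^{F^{*}}_{n}(x)-g(F^{*}(x))\bigr|\le \omega_{R}(\delta+1/n)+2R(0)\,e^{-2(n-1)\delta^{2}}.
\end{equation*}
Given $\eps>0$, uniform continuity of $R$ on $[0,1]$ lets me pick $\delta>0$ with $\omega_{R}(2\delta)<\eps/2$; for $n$ large enough, $\delta+1/n\le 2\delta$ and the exponential term drops below $\eps/2$, yielding the claim.

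The main obstacle is precisely the atom case: at points where $F^{*}$ jumps, a naive modulus-of-continuity bound over $[1-y_{2}-\delta,1-y_{1}+\delta]$ fails because $y_{2}-y_{1}$ can be large; Lemma~\ref{lemma:atom}, reinforced by continuity of $R$ at the right endpoint $1-y_{1}$, is exactly what rescues the uniform estimate, and is the only place where continuity of $R$ (beyond right-continuity) is used in the argument.
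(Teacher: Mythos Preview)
Your proof is correct and takes a genuinely different route from the paper's. The paper first establishes pointwise convergence (via Lemma~\ref{lemma:xi-conv0} at atoms of~$F^*$ and via~\eqref{eq:gn-conv} at continuity points), then upgrades to uniform convergence by the soft fact that monotone functions converging pointwise on a compact to a \emph{continuous} limit converge uniformly; continuity of $\xi^{F^*}$ is read off from the explicit formula $g(F^*(x))=[R(1)+(\bar R-R(1))h(x)]\wedge R(0)$. You instead give a direct quantitative estimate: the DKW inequality controls $\hat F_{n-1}$ uniformly in~$y$, and your atom-constancy observation (Lemma~\ref{lemma:atom} plus continuity of~$R$ at the endpoint $1-y_1$) guarantees that every summand $R(\ell/n)$ in~\eqref{eq:xi_n} lies within $\omega_R(\delta+1/n)$ of $g(F^*(x))$, irrespective of how large the atom is. This bypasses monotonicity entirely and yields an explicit rate $\omega_R(\delta+1/n)+2R(0)e^{-2(n-1)\delta^2}$, which the paper's argument does not provide. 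One minor quibble with your closing commentary: continuity of~$R$ beyond right-continuity is not used \emph{only} for the atom-constancy extension---it is equally essential for $\omega_R(\cdot)\to 0$---but you do invoke uniform continuity explicitly in the final step, so this is purely a matter of emphasis, not a gap.
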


\begin{proof}
We first show that $\xi^{F^*}_n$ converges to $\xi^{F^*}$ pointwise. The convergence at points of discontinuity of~$F^*$ holds by Lemma~\ref{lemma:xi-conv0}. At points of continuity, we have $\xi_n^{F^*}=g_n \circ F^*$ and $\xi^{F^*}=g \circ F^*$. The pointwise convergence then follows from \eqref{eq:gn-conv} and the assumed continuity of $g$.

By Theorem~\ref{thm:MFE}, $F^*$ has compact support $[0, \bar x]$. For $x>\bar x$ it is clear that $|\xi_n^{F^*}(x)-\xi^{F^*}(x)|=|g_n(1)-g(1)|=|R(1/n)-R(0)|\rightarrow 0$. To see that the convergence is also uniform on $[0, \bar x]$, we note that $\xi_n^{F^*}$ is an increasing function for each~$n$. Moreover, the pointwise limit $\xi^{F^*}$ is continuous: as $g$ is continuous, we have $g(g^{-1}(z))=z$ and then $\xi^{F^*}(x)=g(F^*(x))=[R(1)+(\bar R-R(1))h(x)]\wedge R(0)$ is continuous as well; cf.\ Lemma~\ref{lemma:atom} and Theorem~\ref{thm:MFE}. A standard argument for monotone functions
then yields that the pointwise convergence is uniform.
\end{proof}

\begin{lemma}\label{lemma:CLT}
If $g$ has a jump at $y\in (0,1)$, then $\liminf_n g_n(y) \ge [g(y)+g(y+)]/2$.\footnote{The reverse inequality $\limsup_n g_n(y) \le [g(y)+g(y+)]/2$ also holds, but is not needed for our purposes.}
\end{lemma}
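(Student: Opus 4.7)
The plan is to evaluate $g_n(y) = E[g(S_{n-1}/n)]$ where $S_{n-1} := (n-1)\hat F_{n-1}(y)$ is $\mathrm{Binomial}(n-1, y)$, and decompose this expectation according to whether $S_{n-1}/n$ lies above or below~$y$. Since $g$ is increasing with a jump at~$y$, each region should contribute roughly half of the corresponding one-sided limit.

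First I would establish the CLT-based probability estimates. The random variable $S_{n-1}$ has mean $(n-1)y$ and standard deviation $\sigma_n := \sqrt{(n-1)y(1-y)} = \Theta(\sqrt{n})$. Thus $\{S_{n-1}/n > y\} = \{S_{n-1} - (n-1)y > y\}$; since $y/\sigma_n \to 0$, the central limit theorem yields $P(S_{n-1}/n > y) \to 1/2$. Analogously, for any fixed $\delta>0$ we have $P(S_{n-1}/n < y - \delta) \to 0$ (as $\{S_{n-1} - (n-1)y < y - n\delta\}$ involves a normalized threshold tending to $-\infty$), hence $P(y - \delta \le S_{n-1}/n \le y) \to 1/2$.

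Next I would split
\[
g_n(y) = E\bigl[g(S_{n-1}/n)\mathbf{1}_{\{S_{n-1}/n > y\}}\bigr] + E\bigl[g(S_{n-1}/n)\mathbf{1}_{\{S_{n-1}/n \le y\}}\bigr]
\]
and lower bound each piece. On the upper event, monotonicity of $g$ gives $g(S_{n-1}/n) \ge g(y+)$, so the first term is $\ge g(y+)\, P(S_{n-1}/n > y)$, which has $\liminf \ge g(y+)/2$. On the lower event, I use that $g\ge 0$ and is increasing to get the pointwise bound
\[
g(S_{n-1}/n)\mathbf{1}_{\{S_{n-1}/n \le y\}} \ge g(y-\delta)\mathbf{1}_{\{y-\delta \le S_{n-1}/n \le y\}},
\]
so its expectation has $\liminf \ge g(y-\delta)/2$. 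Letting $\delta \to 0+$ and invoking the left-continuity $g(y-) = g(y)$ (which follows from right-continuity of $R$) gives $\liminf \ge g(y)/2$ for the second term. Combining yields $\liminf_n g_n(y) \ge [g(y) + g(y+)]/2$.

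The only delicate point is the knife-edge behavior: the mean $(1 - 1/n)y$ of $S_{n-1}/n$ sits just below~$y$ by $y/n$, which is $o(1/\sqrt{n})$ and hence negligible relative to $\sigma_n/n$, so the two halves each get asymptotic probability $1/2$. Once this is in hand, the monotonicity of $g$ and its one-sided limits at~$y$ make the rest of the bookkeeping routine, and no integrability issue arises because $g$ is bounded.
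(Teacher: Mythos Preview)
Your proof is correct and follows essentially the same approach as the paper: split the expectation $g_n(y)=E[g(S_{n-1}/n)]$ at the level~$y$, use monotonicity and $g\ge 0$ to bound each piece by a one-sided value of~$g$ times a probability, apply the CLT to the Binomial to obtain the asymptotic probability~$1/2$ on the upper side, and finish with left-continuity of~$g$ on the lower side. The only cosmetic difference is that you argue directly that $P(S_{n-1}/n>y)\to 1/2$ via the vanishing normalized threshold, whereas the paper introduces an auxiliary $\gamma>0$, shows $\liminf_n P(\cdot>y)\ge 1-N(\gamma)$, and sends $\gamma\to 0$ at the end; these are equivalent ways of handling the moving threshold.
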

\begin{proof}
Let $y'<y$. Using \eqref{eq:gn}, we have 
\begin{align*}
g_n(y)&=E\left[g\left(\frac{n-1}{n}\hat F_{n-1}(y)\right)\right]\\
&\ge g(y') P\left(y'<\frac{n-1}{n}\hat F_{n-1}(y)\le y\right) + g(y+)P\left(\frac{n-1}{n}\hat F_{n-1}(y)> y\right)\\
&=g(y')P\left(\frac{n-1}{n}\hat F_{n-1}(y)> y'\right)+\left(g(y+)-g(y')\right)P\left(\frac{n-1}{n}\hat F_{n-1}(y)> y\right).
\end{align*}
The strong law of large numbers implies $\hat F_{n-1}(y)\rightarrow y$ a.s.\ and hence
\[P\left(\frac{n-1}{n}\hat F_{n-1}(y)> y'\right)\rightarrow 1.\]
By the central limit theorem, $\sqrt{n-1}\left(\hat F_{n-1}(y)-y\right)/\sqrt{y(1-y)}$ converges to $\cN(0,1)$ in distribution. It follows that for any fixed $\gamma>0$ and $n\ge 1+y/[(1-y)\gamma^2]$,
\begin{align*}
P\left(\frac{n-1}{n}\hat F_{n-1}(y)> y\right)&=P\left(\frac{\sqrt{n-1}\left(\hat F_{n-1}(y)-y\right)}{\sqrt{y(1-y)}}> \frac{1}{\sqrt{n-1}}\sqrt{\frac{y}{1-y}}\right)\\
&\ge P\left(\frac{\sqrt{n-1}\left(\hat F_{n-1}(y)-y\right)}{\sqrt{y(1-y)}}> \gamma \right)\rightarrow 1-N(\gamma),
\end{align*}
where $N(\cdot)$ is the standard normal cdf. Combining the two limits, we obtain
\[\liminf_n g_n(y) \ge g(y')+(g(y+)-g(y'))(1-N(\gamma))=g(y')N(\gamma)+g(y+)(1-N(\gamma)).\]
In view of the left-continuity of $g$, sending $\gamma\rightarrow 0$ and $y'\rightarrow y$ concludes the proof. 
\end{proof}

\begin{proof}[Proof of Theorem~\ref{th:epsEquilibrium}.]
\emph{Part~1: Sufficiency.} Let $R$ be continuous and $\eps>0$.  Lemma~\ref{lemma:xi-cov} shows the existence of $n_\eps$ such that $\|\xi^{F^*}_n-\xi^{F^*}\|_\infty<\eps/2$ whenever $n\ge n_\eps$. Let $F\in \cF$. For $n\ge n_\eps$, noting that $\int \xi^{F^*} dF- \int \xi^{F^*} dF^*\leq0$ by the equilibrium property of~$F^*$,
\begin{align*}
&\int \xi^{F^*}_n dF - \int \xi^{F^*}_n dF^* \\
 &=\int \xi^{F^*}_n dF - \int \xi^{F^*} dF + \int \xi^{F^*} dF- \int \xi^{F^*} dF^* + \int \xi^{F^*} dF^* -\int \xi^{F^*}_n dF^*\\
& \le \int \left|\xi^{F^*}_n-\xi^{F^*}\right| dF + \int \left|\xi^{F^*}_n-\xi^{F^*}\right| dF^*<\eps.
\end{align*}
This proves the $o(1)$-Nash property of $F^*$.

\emph{Part~2: Necessity.} Let $g(y)=R(1-y)$ have a jump at $y_0\in (0,1)$. We show the stronger statement
\begin{equation}\label{claim:epsNE}
\sup_{F\in \cF}\liminf_n \left(\int \xi_n^{F^*}dF - \int \xi_n^{F^*}dF^* \right)>0.
\end{equation}
Let 
\[a=h^{-1}\left(\frac{g(y_0)-R(1)}{\bar R-R(1)}\right)\quad \text{and} \quad b=h^{-1}\left(\frac{g(y_0+)-R(1)}{\bar R-R(1)}\right).\]
By Theorem~\ref{thm:MFE}, the associated mean field equilibrium $F^*$ is flat on $[a,b)$ and $F^*(a-\eta)<F^*(a)=y_0<F^*(b+\eta)$ for any $\eta>0$. Suppose players~$2, \ldots, n$ all use~$F^*$ with associated measure~$\nu^*$ and player~1 considers an alternative strategy of the form
\[\nu=\nu^*- \zeta + |\zeta|\delta_{a'} \]
for some $a' \in (a,b)$ and a subprobability $\zeta\le \nu^*$ with density $d\zeta/d\nu^*= f$. To ensure the feasibility of $\nu$, we require $\int h d\nu = \int h d\nu^*$ which translates to
\begin{equation}\label{eg-eq0}
 \int \left(h(a')-h\right)f d\nu^*=0.
\end{equation}
Our goal is to obtain a lower bound for the payoff difference
\begin{equation}\label{eg-eq1}
\int \xi_n^{F^*}d\nu - \int \xi_n^{F^*}d\nu^*=|\zeta| \xi_n^{F^*}(a')-\int  \xi_n^{F^*} f  d\nu^*
\end{equation}
 that is independent of $n$ for $n$ large.
Since $F^*$ is continuous at $a'$, we have $\xi^{F^*}_n(a')=g_n(F^*(a'))=g_n(y_0)$. By Lemma~\ref{lemma:CLT}, 

\begin{equation}\label{eg-eq2}
\liminf_n \xi^{F^*}_n(a') =\liminf_n g_n(y_0) \ge \frac{g(y_0)+g(y_0+)}{2}.
\end{equation}
Let $x\ge 0$. If $F^*$ is continuous at $x$, we use \eqref{eq:gn-conv} to get 
$\limsup_n\xi_n^{F^*}(x)= \limsup_n g_n(F^*(x))\le g(F^*(x)+)$, whereas
if $F^*$ has a jump at $x$, we use Lemma~\ref{lemma:xi-conv0} to get 
$\xi_n^{F^*}(x)\rightarrow \xi^{F^*}(x)=g(F^*(x)).$
Reverse Fatou's lemma then implies
\begin{align}\label{eg-eq3}
\limsup_n \int \xi_n^{F^*}f d\nu^* & \le \int  g(F^*(\cdot)+)f d\nu^*|_{\R_+\backslash\{a\}}+g(F^*(a))f(a)\nu^*\{a\}.
\end{align}
Substituting \eqref{eg-eq2} and \eqref{eg-eq3} into \eqref{eg-eq1}, we obtain
\begin{align*}
&\liminf_n \left(\int \xi_n^{F^*}d\nu - \int \xi_n^{F^*}d\nu^*\right) \\
&\ge \frac{g(y_0)+g(y_0+)}{2} \int f d\nu^*-\int g(F^*(\cdot)+)f d\nu^*|_{\R_+\backslash\{a\}}-g(y_0)f(a)\nu^*\{a\} \notag\\
&= \frac{g(y_0+)-g(y_0)}{2}f(a)\nu^*\{a\}+ \int \left(\frac{g(y_0)+g(y_0+)}{2}-g(F^*(\cdot)+)\right)f d\nu^*|_{[0,a)} \\
&\quad- \int \left(g(F^*(\cdot)+)-\frac{g(y_0)+g(y_0+)}{2}\right)f d\nu^*|_{[b,\infty)}.
\end{align*}
As $F^*(x)<y_0$ for $x<a$ and thus $g(F^*(x)+)\le g(y_0)=a$, we can further bound the above expression from below by
\begin{equation*}\label{eg-eq4}
C_f := \frac{g(y_0+)-g(y_0)}{2}\int f d\nu^*|_{[0,a]}- \int \left(R(0)-\frac{g(y_0)+g(y_0+)}{2}\right)f d\nu^*|_{[b,\infty)}.
\end{equation*}
It remains to show that by choosing a suitable Radon--Nikodym derivative $f$, the lower bound $C_f$ for the expected improvement can be made strictly positive. 
To this end, we pick
\[f(x)=1_{(a-\eta, a]}(x)+\lambda 1_{[b,\infty)}(x)\]
for some constants $\lambda \in [0,1]$ and $\eta> 0$ to be determined. With this form of $f$, we always have $0\le \zeta \le \nu^*$ and the feasibility condition \eqref{eg-eq0} becomes
\[\lambda =\frac{\int \left(h(a')-h\right) d\nu^*|_{(a-\eta,a]}}{\int \left(h-h(a')\right) d\nu^*|_{[b,\infty)}}\in \left(0, \frac{h(a')-h((a-\eta)\vee 0)}{h(b)-h(a')} \cdot \frac{\nu^*(a-\eta, a]}{\nu^*[b,\infty)}\right].\]
We use this equality as definition for~$\lambda$. Then
\begin{align*}
C_f&=\frac{g(y_0+)-g(y_0)}{2}\nu^*(a-\eta,a] -\lambda \left(R(0)-\frac{g(y_0)+g(y_0+)}{2}\right) \nu^*[b,\infty)\\
&\ge \nu^*(a-\eta, a]\left(\frac{g(y_0+)-g(y_0)}{2}-\left(R(0)-\frac{g(y_0)+g(y_0+)}{2}\right)\frac{h(a')-h((a-\eta)\vee 0)}{h(b)-h(a')}\right),
\end{align*}
where the inequality is derived by replacing $\lambda$ by its upper bound.
Choose $a'- a$ and $\eta$ sufficiently small so that
\[\frac{h(a')-h((a-\eta)\vee 0)}{h(b)-h(a')}< \min\left(\nu^*[b,\infty), \frac{g(y_0+)-g(y_0)}{2R(0)-g(y_0)-g(y_0+)}\right).\]
Then $\lambda\in (0,1)$ and $C_f>0$. This concludes the proof of \eqref{claim:epsNE} and hence of the theorem.
\end{proof}

\subsection{Convergence of the Optimal Reward Design}\label{se:convOfOptDesign}

We have seen in Theorem~\ref{th:design} that the optimal design to maximize performance at a given target rank~$\alpha$ is the cut-off reward at that same rank. As mentioned in the Introduction, the best design in the prelimit is more complicated: for the $n$-player game with zero drift, the cut-off at a certain rank $k^{*}_{n}$ is optimal for the expected performance at target rank~$k$. A formula (recalled below) for~$k^{*}_{n}$ was found in~\cite{NutzZhang.21a}, and it is also noted that $k^{*}_{n}\geq k$, with $k^{*}_{n}> k$ unless $k$ or $k/n$ are small. For drift $\mu>0$, a cut-off is again optimal, but the exact location of the cut-off is not known, whereas for $\mu<0$, the optimal shape can look smoother than the sharp cut-off. In this section, we numerically compare the $n$-player game with the mean field limit for large~$n$, focusing on $\mu=0$ in order to have an exact result available for finite~$n$.

We recall from \cite[Proposition~3.11]{NutzZhang.21a} that the optimal normalized reward for the expected $k$-th rank performance in the $n$-player game with $\mu=0$ is the cut-off at~$k^{*}_{n}$ (i.e., $R_i = 1/k^{*}_{n}$ for $i\le k^{*}_{n}$ and $R_i =0$ for $i>k^{*}_{n}$), where~$k^{*}_{n}$ is determined as
\[k^{*}_{n}=\max\left\{j\ge k: \phi(k,j)\ge \frac{1}{j-1} \sum_{l=1}^{j-1}\phi(k,l)\right\}, \quad \phi(k,l):=\frac{(2n-k-l)!(k+l-2)!}{(n-l)!(l-1)!}.\]
The corresponding expected $k$-th rank performance is  
 \begin{equation}\label{eq:k-n-RankPerformance}
    nx_{0} \frac{n!}{(2n-1)!} {{n-1}\choose{k-1}} \frac{1}{k^{*}_{n}}  \sum_{l=1}^{k^{*}_{n}}\phi(k,l).
  \end{equation}
If we scale $k$ proportionally to $n$ by fixing $k/n \approx \alpha \in (0,1)$, we can compare the optimal cut-off ratio~$k^{*}_{n}/n$ with the mean field optimal cut-off~$\alpha$. In the numerical example, we consider the median performance; i.e., $\alpha =0.5$. A similar behavior can be observed for other choices of~$\alpha$.

\begin{figure}[t]
\centering
\includegraphics[width=\textwidth]{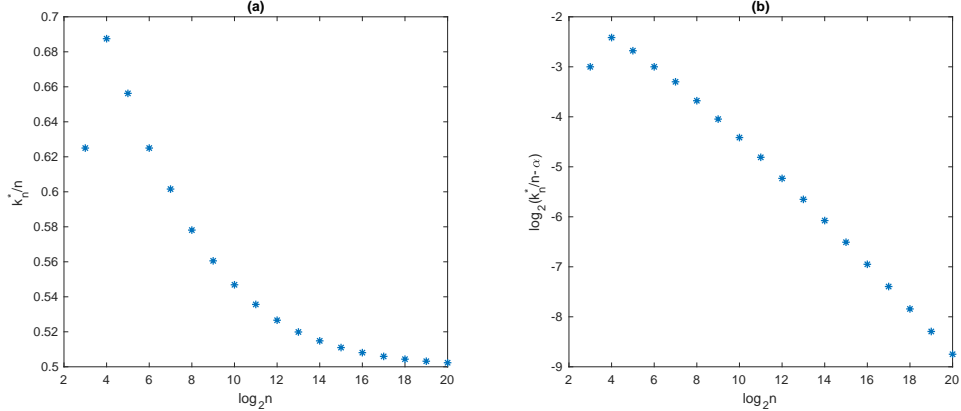}
\caption{(a) Convergence of the optimal cut-off ratio $k^{*}_{n}/n$ to $\alpha$. (b) Log-log plot of the difference $k^{*}_{n}/n-\alpha$, illustrating that $k^{*}_{n}/n$ converges to $\alpha$ at rate approximately $O(n^{-r})$ for a fractional power~$r$. Increments of~$n$ in all plots are chosen such as to avoid rounding effects related to the fact that $k^{*}_{n}$ must be integer. On a finer scale for~$n$, there are oscillations (cf.\ \cite[Figure~3]{NutzZhang.21a}) which however disappear in the large $n$ limit.}
\label{fig:conv_cutoff}
\end{figure}

Figure~\ref{fig:conv_cutoff} shows that $k^{*}_{n}/n$ converges
to $\alpha$ as $n\rightarrow\infty$. The convergence is rather slow; e.g., for $n=1024$, the optimal cut-off rank is still more than 9\% larger than the mean-field optimum. 
This already suggests that using the mean field optimal design as a proxy for the $n$-player design may be problematic at least for moderate~$n$. %

Next, we consider the quality of the mean field proxy from the point of view of the principal: we fix the optimal design~$R^{*}$ from the mean field setting (Theorem~\ref{th:design}) and compare the resulting expected performance in the $n$-player game with the performance~\eqref{eq:k-n-RankPerformance} of the exact optimizer given by $k^{*}_{n}$. For comparison, we mention that the analogous question was considered in the Poissonian model of~\cite{NutzZhang.19}, for the same performance functional of the principal, and there the mean field proxy was shown to be $O(1/n)$-optimal for the $n$-player design problem.

\begin{figure}[t]
\centering
\includegraphics[width=\textwidth]{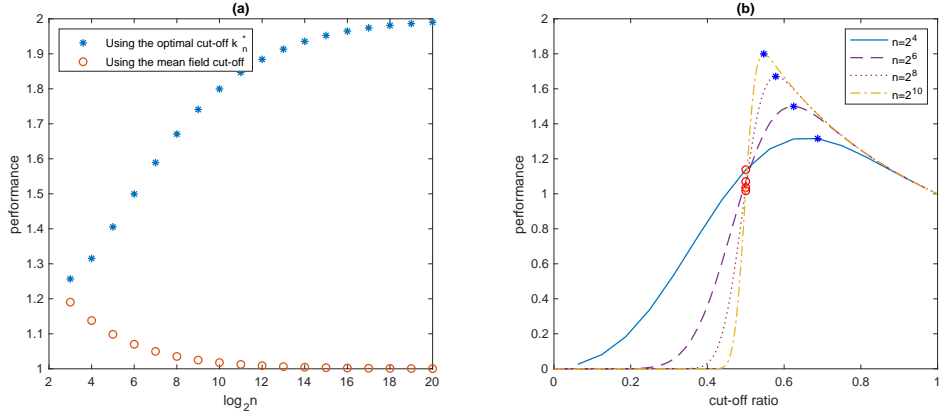}
\caption{Red circles correspond to the mean field proxy (cut-off at rank $\alpha n$ or ratio $\alpha$), blue stars correspond to the exact $n$-player optimizer (cut-off at rank $k^{*}_{n}$ or ratio $k^{*}_{n}/n$). Here $\alpha=0.5$ and $x_{0}=1$. (a)~Performance of the mean field proxy diverges from the optimal design given by~$k^{*}_{n}$. (b)~Median player's performance for all cut-off schemes when $n=2^4,\,2^6,\,2^8,\,2^{10}$.  As $n$ increases, the blue and red points converge in the horizontal direction but nevertheless diverge in the vertical direction.}
\label{fig:div}
\end{figure}

Figure~\ref{fig:div}\,(a) shows not only that the performance of the proxy may be significantly inferior for finite~$n$, but indeed that the performances diverge as $n\to\infty$, with the exact solution performing twice as well. The performance of the exact solution converges to the optimal performance in the mean field model as stated in Theorem~\ref{th:design}, $x^*_\alpha=h^{-1}(1/\alpha)=2x_0$, but the performance of the proxy does not.

Figure~\ref{fig:div}\,(b) plots the same data points for some values of~$n$, together with curves showing the performance of any cut-off strategy as a function of the cut-off location. For larger~$n$, the curves are increasingly steep in a left neighborhood of the maximum: the vertical distance between the data points increases even though the horizontal distance decreases. In other words, the performance of $R^{*}$ is increasingly inferior despite the cut-off location approximating the optimal location. 

The reason lies in the lack of smoothness of the mean field game. Indeed, we know that the equilibrium distribution~$F^{*}_{n}$ induced by $R^{*}$ in the $n$-player game converges weakly to the mean field equilibrium~$F^{*}$ which is a two-point distribution  (Theorems~\ref{th:equilibConvergence} and~\ref{th:design}). While~$F^{*}_{n}$ is increasingly concentrated around the location of the limiting atoms at~$0$ and~$x^{*}_{\alpha}$ for large~$n$, the distribution is still smooth with connected support for finite~$n$, so that the $(1-\alpha)$-quantile stretches far beyond~$x^{*}_{\alpha}$, causing the inferior performance.

We emphasize that the reason for the poor quality of the proxy observed here is very different from the knife-edge phenomenon leading to the negative result in Theorem~\ref{th:epsEquilibrium}, and quite possibly more relevant to applications.

\bibliography{stochfin}
\bibliographystyle{plain}
\end{document}